\newtheorem*{theoremA4.1}{Theorem A3.1}
\newtheorem{prop}{Proposition}[section]
\newtheorem{lemma}[prop]{Lemma}
\newtheorem{theorem}[prop]{Theorem}
\newtheorem{cor}[prop]{Corollary}
\theoremstyle{remark}
\newtheorem{remark}[prop]{Remark}
\numberwithin{equation}{section}
\begin{document}

\author{Hiroki Takahasi}

\address{Keio Institute of Pure and Applied Sciences (KiPAS), Department of Mathematics,
Keio University, Yokohama,
223-8522, JAPAN} 
\email{hiroki@math.keio.ac.jp}
\urladdr{\texttt{http://www.math.keio.ac.jp/~hiroki/}}

\subjclass[2010]{37A45, 37A50, 37A60, 60F10}
\thanks{{\it Keywords}: Large Deviation Principle; countable Markov shift; Gibbs-equilibrium state; minimizer}

\title[Uniqueness of minimizer and equidistribution]{
Uniqueness of minimizer for\\
 countable Markov shifts and\\  equidistribution of periodic points} 
 
 \date{\today}
 
 \maketitle
   \begin{abstract}
For a finitely irreducible countable Markov shift and a potential with summable variations,
 we provide a condition on the associated pressure function 
which ensures that
Bowen's Gibbs state, the equilibrium state, and the minimizer of the level-2 large deviations rate function 
are all unique and they coincide.
 From this, we deduce that all periodic points weighted with the potential equidistribute with respect to the Gibbs-equilibrium state
   as the periods tend to infinity.
Applications are given to the Gauss map, and the Bowen-Series map associated with a finitely generated 
free Fuchsian group with parabolic elements.
   \end{abstract}

  \section{Introduction}
The theory of large deviations aims to characterize limit behaviors of measures in terms of rate functions.
 A sequence $\{\mu_n\}_{n=1}^\infty$ of Borel probability measures on a topological space 
 $\mathcal X$ satisfies {\it the Large Deviation Principle} (LDP) if
there exists a lower semi-continuous function 
 $I\colon\mathcal X\to[0,\infty]$ such that
  for every Borel subset $\mathcal B$ of $\mathcal X$
   the following holds:
 \begin{equation*}
-\inf_{{\mathcal B}^o}I\leq\varliminf_{n\to\infty}\frac{1}{n} \log \mu_n(\mathcal B^o)\leq\varlimsup_{n\to\infty}\frac{1}{n} \log \mu_n(\overline{\mathcal B})\leq
-\inf_{\overline{\mathcal B}}I,\end{equation*}
where $ \log 0=-\infty$, $\inf\emptyset=\infty$,
${\mathcal B}^o$ and $\overline{\mathcal B}$ denote the interior and the closure of $\mathcal B$ respectively.
The function $I$ is called {\it a rate function}, and is called {\it a good rate function}
if the level set $\{x\in\mathcal X\colon I(x)\leq\alpha\}$ is compact for every $\alpha\in(0,\infty)$.

If $\mathcal X$ is a metric space, then the rate function is unique.
We call $x\in\mathcal X$ {\it a minimizer} if $I(x)=0$. 
For a closed set $\mathcal B$ of $\mathcal X$ which is disjoint from the set of minimizers,
the LDP ensures that $\mu_n(\mathcal B)$ decays exponentially as $n\to\infty$.
If moreover $I$ is a good rate function,  
the support of any accumulation point
of $\{\mu_n\}_{n=1}^\infty$ is contained in the set of minimizers.
Hence, it is important to determine the set of minimizers.
The non-uniqueness of minimizer is referred to as a phase transition.
The uniqueness of minimizer implies several strong conclusions.

Donsker and Varadhan have identified three levels of the LDP, see \cite[Chapter 1]{Ell85} for details.
This paper is concerned with the level-2 LDP in the thermodynamic formalism.
 For finite topological Markov shifts and H\"older continuous potentials,
unique Gibbs-equilibrium states have been constructed by Bowen \cite{Bow75} and Ruelle \cite{Rue78}.
Extensions of this construction to countable Markov shifts have been done by Mauldin and Urba\'nski \cite{MauUrb03}, Sarig \cite{Sar99,Sar03}
after the works of Walters \cite{Wal78}, Gurevic and Savchenko \cite{GS98}.
The level-2 LDP for finite Markov shifts has been established by Takahashi \cite{Tak84}, Kifer \cite{Kif90,Kif94},
and in some particular cases by
Orey and Pelikan \cite{OrePel89}.

The level-2 LDP for countable Markov shifts has been established in \cite{Tak18}
under the assumption of the existence of a Gibbs state and a strong connectivity of transition matrices.
The assumption on potentials in \cite{Tak18} is rather general, and not much
 is known about minimizers in such a generality.
As the first result of this paper,  in Theorem \ref{minimizer-unique} we provide conditions 
 which ensure the uniqueness of minimizer.

  The uniqueness of minimizer implies several strong conclusions. e.g.,
an exponential decay,  Erd\"os-R\'enyi's law \cite{DenKab07} for the level-1 LDP, the differentiability of pressure
\cite{Ell85}.
The second result of this paper is Theorem \ref{distribution} which asserts that 
the uniqueness of minimizer in Theorem \ref{minimizer-unique} implies a weighted equidistribution 
of periodic points.
For dynamical systems of compact spaces,
    several large deviations approaches to
    distributions of periodic points are known, see e.g., \cite{Kif94,PolSha96}.
    A natural problem to consider is an extension of these results to dynamical systems of non-compact spaces,
    notably to countable Markov shifts.
  One existing result closely related to Theorem \ref{distribution} is that of Fiebig et al. \cite{FieFieYur02}
  to which we shall come back later on.

 \subsection{Thermodynamic formalism for countable Markov shifts}
  Let $S$ be a countable set and $\mathbb N$ the set of non-negative integers.
  Let $S^{\mathbb N}$ denote
  the set of one-sided infinite sequences over $S$ endowed
with the product topology of the discrete topology on $S$, namely
 $$S^{\mathbb N}=\{x=(x_0,x_1,\ldots)\colon x_i\in S\quad\forall i\in\mathbb N\}.$$
This topology is metrizable with a metric 
 $d(x,y)=\exp\left({-\inf\{i\in\mathbb N\colon x_i\neq y_i\}}\right)$
 with the convention $\exp(-\infty)=0$.
 The left shift $\sigma$ acts continuously on $S^{\mathbb N}$ by $(\sigma x)_i=x_{i+1}$ $(\forall i\in\mathbb N)$.
 Let $T=(t_{ij})_{S\times S}$ be a matrix of zeros and ones with no column or row which is all made of zeros.
 {\it A} (one-sided) {\it topological Markov shift} $X$ generated by the transition matrix $T$ is given by
$$X=\{x\in S^{\mathbb N}\colon t_{x_ix_{i+1}}=1\quad\forall i\in\mathbb N\}.$$
  If $\#S=\infty$ (resp. $\#S<\infty$),  $X$ is called {\it a countable} (resp. {\it finite}) {\it Markov shift}.
  If all entries of the matrix are $1$, it is called {\it the full shift}.
  The restriction of the left shift to $X$ is denoted by $\sigma|_X$.

  For two strings $\omega=\omega_0\cdots \omega_{m-1}$, $\alpha=\alpha_0\cdots  \alpha_{n-1}$ of elements of $S$,
 let $\omega\alpha$ denote the concatenated string
$\omega_0\cdots \omega_{m-1}\alpha_0\cdots \alpha_{n-1}$.
An $n$-string  $\omega_0\cdots \omega_{n-1}$ 
  is {\it admissible} if $n=1$, or else $n\geq2$ and $t_{\omega_{i}\omega_{i+1}}=1$ holds for every $0\leq i\leq n-1$.
 Let $E^n$ denote the set of $n$-admissible strings and put $E^*=\bigcup_{n=1}^\infty E^n$.
   A countable Markov shift $X$ is {\it finitely irreducible}
if there exists a finite set $\Lambda\subset E^*$ 
such that for all $\omega,\omega'\in E^*$ there exists $\lambda\in\Lambda$
for which $\omega\lambda \omega'\in E^*$.
If $X$ is finitely irreducible and the finite set $\Lambda$
consists of strings of the same length $N$, then $X$ is called {\it finitely primitive}.
Notice that the set $\Lambda$ associated either with a finitely irreducible or 
primitive $X$ can be taken to be empty for the full shift (in which case $N=0$).

Let $\mathcal M$ denote the space of Borel probability measures on $X$
 endowed with the weak*-topology, and $\mathcal M(\sigma|_X)$ the set of shift-invariant elements 
of $\mathcal M$.
 The space $\mathcal M$ is metrizable with the bounded Lipschitz metric.
The Kolmogorov-Sina{\u\i} entropy of
each measure $\mu\in\mathcal M(\sigma|_X)$ with respect to $\sigma|_X$ is denoted by $h(\mu)$.
For a measurable function $\phi\colon X\to\mathbb R$ with $\sup\phi<\infty$ define
$$\mathcal M_\phi(\sigma|_X)=\left\{\mu\in\mathcal M(\sigma|_X)\colon\int\phi d\mu>-\infty\right\}.$$

 For each $n$-string $\omega=\omega_0\cdots \omega_{n-1}\in E^n$ 
define {\it an $n$-cylinder} 
 $$[\omega]=[\omega_0,\ldots,\omega_{n-1}]=\{x\in  X\colon x_i=\omega_i
 \quad0\leq \forall i\leq n-1\}.$$
 Put
  $$Z_n(\phi)=\sum_{\omega\in E^n}\sup_{[\omega]}\exp S_n\phi,$$
 where $S_n\phi=\sum_{i=0}^{n-1}\phi\circ \sigma^i$.
 Since $n\mapsto \log Z_n(\phi)$ is sub-additive, the limit  
 \begin{equation}\label{press}
 \lim_{n\to\infty}\frac{1}{n}\log Z_n(\phi).\end{equation}
in \eqref{press} exists, and in fact never $-\infty$.
 Define
 \begin{equation}\label{press0}
 P(\phi)=\sup\left\{h(\mu)+\int\phi d\mu\colon \mu\in\mathcal M_\phi(\sigma|_X)\right\}.\end{equation}
  We say {\it the variational principle} holds if 
  the limit in \eqref{press} is equal to $P(\phi)$.
If the variational principle holds and $P(\phi)<\infty$, then measures in $\mathcal M_\phi(\sigma|_X)$ which attain the supremum 
in \eqref{press0}
 are called {\it equilibrium states for the potential $\phi$}.

  A Borel probability measure $\mu_\phi$ on $X$ is {\it a Gibbs state (in the sense of Bowen) for the potential $\phi$}
(cf. \cite{Bow75,MauUrb03,Rue78,Sar99})
if there exist constants $c\geq1$ and $P\in\mathbb R$ such that
 for every $n\geq1$ and every $x\in X$, 
 \begin{equation}\label{Gibbs}
 c^{-1}\leq\frac{\mu_\phi  [x_0,\ldots,x_{n-1}] }{\exp\left(-Pn+S_n \phi(x)\right)}\leq c.
 \end{equation}
 If $\mu_\phi$ is shift-invariant, then it is called {\it a shift-invariant Gibbs state}.



  \subsection{Uniqueness of minimizer}
  Let $X$ be a countable Markov shift.
   A function $\phi\colon X\to\mathbb R$ is {\it summable} if 
$$Z_1(\phi)<\infty.$$
The summability of $\phi$ implies $\sup\phi<\infty$ and $\inf\phi=-\infty$. 
For a summable function $\phi$, define 
$$\beta_\infty(\phi)=\inf\{\beta\in\mathbb R\colon \text{$\beta\phi$ is summable}\}.$$
If  $\beta_0\geq0$ and $\beta_0\phi$ is summable then so is $\beta\phi$ for every $\beta>\beta_0$.
Hence, $0\leq\beta_\infty(\phi)\leq1$ holds.
A function $\phi\colon X\to\mathbb R$ has {\it summable variations} if 
 $$\sum_{n=1}^\infty\sup_{\omega\in E^n}\sup_{x,y\in [\omega]}\phi(x)-\phi(y)<\infty.$$
 If $X$ is finitely irreducible and $\phi$ is summable with summable variations,
 then the variational principle holds, and
there exists a unique shift-invariant Gibbs state $\mu_\phi$ for the potential $\phi$.
If moreover $\int\phi d\mu_\phi>-\infty$, then $\mu_\phi$ is the unique equilibrium state for the potential $\phi$
(see \cite{MauUrb03}).
One can verify this integrability provided $\beta_\infty(\phi)<1$. In fact, this condition also allows us 
to show the uniqueness of minimizer.
We obtain the following result.

    \begin{theorem}\label{minimizer-unique}
  Let $X$ be a finitely irreducible countable Markov shift and
 $\phi\colon X\to\mathbb R$ a summable function with summable variations satisfying
 $\beta_\infty(\phi)<1.$ 
Then,  the minimizer of the rate function is unique.
It coincides with 
the unique shift-invariant Gibbs state, and
 the unique equilibrium state for the potential $\phi$.
 \end{theorem}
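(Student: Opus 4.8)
The plan is to reduce the three assertions to two facts: that the zeros of the level-2 rate function are exactly the equilibrium states, and that the hypothesis $\beta_\infty(\phi)<1$ forces the unique shift-invariant Gibbs state $\mu_\phi$ to satisfy $\int\phi\,d\mu_\phi>-\infty$. The bulk of the work lies in this integrability estimate; everything else is assembled from the results quoted above.

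First I would recall the shape of the rate function furnished by the LDP of \cite{Tak18}, which applies here because $X$ is finitely irreducible and a shift-invariant Gibbs state exists. On the shift-invariant measures with $\int\phi\,d\mu>-\infty$ it reads $I(\mu)=P(\phi)-h(\mu)-\int\phi\,d\mu$, and $I\equiv+\infty$ elsewhere. Since $\phi$ is summable, subadditivity of $n\mapsto\log Z_n(\phi)$ gives $P(\phi)\le\log Z_1(\phi)<\infty$, so $P(\phi)$ is finite. The variational principle, which holds under our hypotheses, then shows $I\ge0$ on all of $\mathcal M$, with $I(\mu)=0$ precisely when $h(\mu)+\int\phi\,d\mu=P(\phi)$, i.e.\ when $\mu$ is an equilibrium state. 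In particular every minimizer is automatically shift-invariant with finite integral, and the set of minimizers coincides with the set of equilibrium states.

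Next comes the integrability step. I would choose $\beta$ with $\beta_\infty(\phi)<\beta<1$, so that $\beta\phi$ is summable, that is $\sum_{a\in S}\exp(\beta\sup_{[a]}\phi)<\infty$; by summable variations the supremum and infimum over $[a]$ differ by a bounded amount, hence also $\sum_{a\in S}\exp(\beta\inf_{[a]}\phi)<\infty$. Because $\sup\phi<\infty$ the positive part of $\phi$ is trivially integrable, so only the negative part is at issue. Applying the Gibbs inequality \eqref{Gibbs} with $n=1$ to every $x\in[a]$ yields $\mu_\phi[a]\le c\,e^{-P}\exp(\inf_{[a]}\phi)$, whence, writing $t_a=-\inf_{[a]}\phi$ on the states where $\phi$ is negative,
\begin{equation*}
\int(-\phi)^{+}\,d\mu_\phi\le c\,e^{-P}\sum_{a\in S}t_a\,e^{-t_a}.
\end{equation*}
Since $1-\beta>0$ one has $t\,e^{-t}\le e^{-\beta t}$ for all large $t$, and the summability of $\beta\phi$ forces $t_a\to\infty$, so all but finitely many terms obey this bound. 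The series therefore converges and $\int\phi\,d\mu_\phi>-\infty$.

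Finally I would invoke \cite{MauUrb03}: under finite irreducibility and summability with summable variations, $\mu_\phi$ is the unique shift-invariant Gibbs state, and once $\int\phi\,d\mu_\phi>-\infty$ it is also the unique equilibrium state. Combining this with the identification of minimizers with equilibrium states from the second paragraph shows that the minimizer is unique and equals $\mu_\phi$, establishing all three claims at once. I expect the main obstacle to be the integrability estimate of the third paragraph; the only remaining delicacy is to confirm that the rate function of \cite{Tak18} takes the asserted form and that its zeros are exactly the equilibrium states, after which the conclusion is a matter of citing the quoted uniqueness results.
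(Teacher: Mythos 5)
Your integrability estimate (third paragraph) is sound and is essentially the computation the paper itself performs: summable variations plus $Z_1(\beta\phi)<\infty$ for some $\beta<1$, combined with the $n=1$ case of the Gibbs inequality \eqref{Gibbs}, gives $\int\phi\,d\mu_\phi>-\infty$, whence $\mu_\phi$ is the unique equilibrium state by \cite{MauUrb03}. The genuine gap is in your second paragraph. The rate function of \cite{Tak18} is \emph{not} given by $I(\mu)=P(\phi)-h(\mu)-\int\phi\,d\mu$ on $\mathcal M_\phi(\sigma|_X)$ and $+\infty$ elsewhere; it is the lower semi-continuous regularization
\begin{equation*}
I(\mu)=-\inf_{\mathcal G\ni\mu}\sup_{\mathcal G}F,
\end{equation*}
the infimum being over open sets $\mathcal G\ni\mu$, as in \eqref{rf}. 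The paper stresses that $-F$ \emph{cannot} be the rate function, precisely because entropy fails to be upper semi-continuous on countable Markov shifts (a rate function must be lower semi-continuous, and $-F$ is not). One only has $I\leq -F$, so a zero of $I$ need not be an equilibrium state, and need not even lie in $\mathcal M_\phi(\sigma|_X)$. Thus the statement you present as an immediate consequence of the variational principle --- ``the set of minimizers coincides with the set of equilibrium states'' --- is exactly the hard content of the theorem, and it is where the hypothesis $\beta_\infty(\phi)<1$ must actually be used (your argument never uses it outside the integrability step).

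That this identification can fail without $\beta_\infty(\phi)<1$ is shown by the Remark following the theorem: on the full shift over $\mathbb N$ take $\phi(x)=\log p_{x_0}$ with $\sum_k p_k=1$ and $\sum_k p_k\log p_k=-\infty$. The Gibbs state is the Bernoulli measure $\mu_\phi$, which is ergodic; hence $\delta_x^n\to\mu_\phi$ almost surely, and the LDP lower bound together with lower semi-continuity of $I$ forces $I(\mu_\phi)=0$, so $\mu_\phi$ is a minimizer. Yet $\int\phi\,d\mu_\phi=-\infty$, so $\mu_\phi$ is not an equilibrium state, and under your claimed formula it would have $I(\mu_\phi)=+\infty$ --- a contradiction showing that formula is false in general. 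In the paper, the identification of minimizers with equilibrium states is Theorem \ref{uniquemin}, and its proof occupies the remainder of the paper: given a minimizer $\mu_*$, all one gets from \eqref{rf} is a sequence $\mu_k\in\mathcal M_\phi(\sigma|_X)$ with $\mu_k\to\mu_*$ weak* and $F(\mu_k)\to 0$, and one must pass the free energy to the limit despite the failure of upper semi-continuity of entropy. This requires the restricted upper semi-continuity result (Theorem \ref{ups}), proved via projections to finite-alphabet subshifts with tail terms controlled by $\beta_\infty(\phi)<1$. Your proposal assumes this entire mechanism away.
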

 
 Theorem  \ref{minimizer-unique} has its analogue in statistical mechanics.
 Dobrushin-Lanford-Ruelle's variational principle \cite{Dob68a, Dob69,LanRue69} states that
 the set of shift-invariant Gibbs states defined by the DLR-equation, that of equilibrium states (measures minimizing the free energy) and 
 that of minimizers (of the relative entropy density) coincide
for shift-invariant absolutely summable interactions. 

  \begin{remark}
The following example inspired by \cite{ArnAve82,MauUrb03,Sar03} shows that the assumption 
$\beta_\infty(\phi)<1$ in Theorem \ref{minimizer-unique} is not removable.
Consider the potential $\phi\colon\mathbb N^{\mathbb N}\to\mathbb R$ given by $\phi(x)=\log p_{x_0}$
where $\{p_k\}_{k\in\mathbb N}$ is a sequence with $p_k\in(0,1)$, $\sum_{k\in\mathbb N} p_k=1$ 
and $\sum_{k\in\mathbb N} p_k\log p_k=-\infty$, e.g.,
$p_k\propto 1/(k(\log k)^2)$. 
The Gibbs state $\mu_\phi$ for the potential $\phi$ 
is the Bernoulli measure associated with 
the infinite probability vector $(p_k)_{k\in\mathbb N}$.
It is ergodic and hence a minimizer.
As $\int\phi d\mu_\phi=-\infty$, it is not an equilibrium state for the potential $\phi$.
\end{remark}

\subsection{Weighted equidistribution}
We now proceed to distributions of periodic points.
For each $n\geq1$ and $x\in X$
let $\delta_x^n$ denote the uniform probability distribution on $\{x,\sigma x,\ldots,\sigma^{n-1}x\}$.
Write ${\rm Per}_n(\sigma|_X)=\{x\in X\colon\sigma^nx=x\}.$
 \begin{theorem}\label{distribution}
   Let $X$ be a finitely primitive countable Markov shift and
 $\phi\colon X\to\mathbb R$ a summable function with summable variations satisfying
   $\beta_\infty(\phi)<1$. As $n\to\infty$,
 the Borel probability measure on $X$ defined by
$$\left( \sum_{x\in{\rm Per}_n(\sigma|_X)}\exp S_n\phi(x)\right)^{-1} \sum_{x\in{\rm Per}_n(\sigma|_X)}
\exp S_n\phi(x)\delta_x^n$$
  converges in the weak*-topology to the equilibrium state for the potential $\phi$.
\end{theorem}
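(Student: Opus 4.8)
The plan is to lift the assertion to the level of measures on $\mathcal M$ and then combine the large deviations upper bound with the uniqueness furnished by Theorem \ref{minimizer-unique}. For each $n\geq1$ introduce the Borel probability measure on $\mathcal M$
$$\mathcal P_n=\left(\sum_{x\in{\rm Per}_n(\sigma|_X)}\exp S_n\phi(x)\right)^{-1}\sum_{x\in{\rm Per}_n(\sigma|_X)}\exp S_n\phi(x)\,\delta_{\delta_x^n},$$
where $\delta_{\delta_x^n}$ denotes the Dirac mass at the point $\delta_x^n\in\mathcal M$; note $\delta_x^n\in\mathcal M(\sigma|_X)$ because the orbit of $x\in{\rm Per}_n(\sigma|_X)$ is periodic, so $\sigma_*$ permutes its atoms cyclically. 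The measure in the statement is exactly the barycenter $\nu_n=\int_{\mathcal M}\mu\,d\mathcal P_n(\mu)$. Since for every bounded continuous $f\colon X\to\mathbb R$ the map $F(\mu)=\int f\,d\mu$ is bounded and continuous on $\mathcal M$, once I know that $\mathcal P_n$ converges weak* to the Dirac mass $\delta_{\mu_\phi}$ on $\mathcal M$ I may test against $F$ to obtain $\int f\,d\nu_n=\int_{\mathcal M}F\,d\mathcal P_n\to F(\mu_\phi)=\int f\,d\mu_\phi$, which is the desired convergence $\nu_n\to\mu_\phi$. Thus it suffices to prove $\mathcal P_n\to\delta_{\mu_\phi}$.

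To this end I would first establish the level-2 LDP for $\{\mathcal P_n\}$ on $\mathcal M$ with a good rate function $I$. Finite primitivity supplies the strong connectivity needed to run the arguments of \cite{Tak18}, while summability of $\phi$ together with summable variations provides the shift-invariant Gibbs state $\mu_\phi$ satisfying \eqref{Gibbs}. The Gibbs inequality and summable variations give bounded distortion, so that for each admissible $n$-cycle $\omega$ the periodic weight $\exp S_n\phi$ is comparable, up to a uniform multiplicative constant, to $\sup_{[\omega]}\exp S_n\phi$. Consequently the normalizing sum $\sum_{x\in{\rm Per}_n(\sigma|_X)}\exp S_n\phi(x)$ has the same exponential growth rate as $Z_n(\phi)$, namely $P(\phi)$ by \eqref{press}, and the combinatorial weights defining $\mathcal P_n$ are, cylinder by cylinder, comparable to the masses assigned by the Gibbs state. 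This comparison transfers the LDP of \cite{Tak18} from the Gibbs-weighted distributions to $\{\mathcal P_n\}$, with the same rate function, whose unique zero is the equilibrium state.

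With the LDP in hand the conclusion follows from Theorem \ref{minimizer-unique}, which identifies the unique minimizer of $I$ with $\mu_\phi$. Fix any open neighborhood $U$ of $\mu_\phi$ in $\mathcal M$. Its complement $U^c$ is closed and disjoint from the minimizer, so by goodness of $I$ any minimizing sequence in $U^c$ subconverges within a compact level set, and by lower semicontinuity its limit is a minimizer lying in $U^c$; since the unique minimizer $\mu_\phi$ lies in $U$, this forces $\inf_{U^c}I>0$. The LDP upper bound then yields $\varlimsup_{n\to\infty}\tfrac1n\log\mathcal P_n(U^c)\leq-\inf_{U^c}I<0$, whence $\mathcal P_n(U^c)\to0$. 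Splitting $\int F\,d\mathcal P_n$ over $U$ and $U^c$, and using the continuity of $F$ at $\mu_\phi$ on $U$ together with the boundedness of $F$ on $U^c$, gives $\int F\,d\mathcal P_n\to F(\mu_\phi)$ for every bounded continuous $F$ on $\mathcal M$, i.e. $\mathcal P_n\to\delta_{\mu_\phi}$. Combined with the first paragraph this proves $\nu_n\to\mu_\phi$.

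The main obstacle is the LDP of the second paragraph on the non-compact space $\mathcal M$, and specifically the exponential tightness that makes $I$ a good rate function and prevents escape of mass of the periodic-point distributions to infinity. This is exactly where the hypothesis $\beta_\infty(\phi)<1$ is indispensable: choosing $\beta\in(\beta_\infty(\phi),1)$ makes $\beta\phi$ summable, which furnishes uniform exponential tail estimates on the contribution of periodic points passing through symbols outside a finite subalphabet, hence the exponential tightness needed both for the upper bound and for compactness of the level sets. Controlling the \emph{periodic-point} sums restricted to such tails—rather than the Gibbs measure, for which \cite{Tak18} already applies—is the technical heart of the argument, and finite primitivity (rather than mere finite irreducibility) is what guarantees that these periodic-orbit partition functions realize the Gurevich pressure $P(\phi)$.
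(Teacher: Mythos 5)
Your skeleton is exactly the paper's: identify the measure in the statement as the barycenter of the periodic-point measures $\eta_n$ on $\mathcal M$ (your $\mathcal P_n$), obtain a level-2 LDP with good rate function for $\{\eta_n\}_{n=1}^\infty$, deduce from goodness, lower semicontinuity and uniqueness of the minimizer that $\eta_n$ converges to the point mass at $\mu_\phi$, and then test against bounded continuous functionals $\mu\mapsto\int\varphi\,d\mu$. Your first and third paragraphs are correct and coincide with the paper's proof.

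The genuine problem is your second paragraph. The LDP for $\{\eta_n\}_{n=1}^\infty$ is not something you can get by a few-line ``transfer'' from the empirical-means LDP, and you do not need to: it is precisely the second assertion of Theorem \ref{Tak}, i.e.\ \cite[Theorem A]{Tak18}, valid for every finitely primitive shift and summable potential with summable variations, with no condition on $\beta_\infty(\phi)$. Your sketched transfer is unsound as stated: the weights of $\mathcal P_n$ and the Gibbs masses are \emph{not} comparable cylinder by cylinder, because an $n$-cylinder $[\omega]$ contains a point of ${\rm Per}_n(\sigma|_X)$ only when $\omega$ closes up admissibly ($t_{\omega_{n-1}\omega_0}=1$); on all other $n$-cylinders $\eta_n$ puts zero mass while $\mu_\phi[\omega]>0$. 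Repairing this requires inserting the connecting words of length $N$ furnished by finite primitivity, hence comparing distributions at the \emph{different} periods $n$ and $n+N$, controlling the displacement of the empirical measures caused by the insertion, and proving exponential tightness of the periodic-point sums separately. That is the actual content of the proof in \cite{Tak18}, not a corollary of the Gibbs property.

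Relatedly, you misattribute the role of the hypothesis $\beta_\infty(\phi)<1$. It is not what produces exponential tightness or goodness of the rate function --- Theorem \ref{Tak} supplies both without it. What $\beta_\infty(\phi)<1$ buys is the \emph{uniqueness of the minimizer} and its identification with the equilibrium state (Theorem \ref{minimizer-unique}, via Theorem \ref{uniquemin} and the integrability $\int\phi\,d\mu_\phi>-\infty$), which is exactly what your third paragraph consumes when it asserts $\inf_{U^c}I>0$. The Remark following Theorem \ref{minimizer-unique} shows this is where things break without the hypothesis: there the Gibbs state is a minimizer with $\int\phi\,d\mu_\phi=-\infty$, hence not an equilibrium state, and the claimed convergence to the equilibrium state fails to even make sense. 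Since you do invoke Theorem \ref{minimizer-unique} in the right place, your argument becomes complete --- and identical to the paper's --- once the second paragraph is replaced by a citation of Theorem \ref{Tak}.
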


For transitive countable Markov shifts,
  Fiebig et al. \cite{FieFieYur02} introduced the notion of {\it $Z$-recurrence}
 in terms of a pressure computed from weighted periodic points contained in one fixed $1$-cylinder.
  Under the $Z$-recurrence and some additional assumptions, they showed that
   the sequence of measures constructed 
 from these periodic points are tight, and all its weak*-accumulation points are 
 equilibrium states for the potential \cite[Theorem 4.2]{FieFieYur02}.
  Theorem \ref{distribution} does not follow from \cite[Theorem 4.2]{FieFieYur02}
  because it concerns all periodic points in $X$ and
  there are infinitely many $1$-cylinders.
The tightness of the sequence of measures in Theorem \ref{distribution} is a consequence of the exponential tightness \cite{DemZei98}
of the sequence $\{\eta_n\}_{n=1}^\infty$ of measures on $\mathcal M$ in Theorem \ref{Tak}. 

We apply Theorem \ref{distribution} to the Gauss map 
$$T\colon x\in(0,1]\mapsto  \frac{1}{x}-\left\lfloor\frac{1}{x}\right\rfloor\in[0,1).$$
Following the orbits of $T$ over the infinite Markov partition $\{(\frac{1}{k+1},\frac{1}{k}]\}_{k\geq1}$,
 one can model $T$ with the countable full shift. 
  The unbounded function
  $-\log|T'|$ 
  induces a potential $\phi$
  on the full shift which is summable with summable variations (see \cite[Section 7]{FieFieYur02}).
The Gauss measure $\frac{1}{\log2}\frac{dx}{1+x}$ corresponds to the unique equilibrium state
for the potential $\phi$ (see \cite[Theorem 22]{Wal82}), and
we have $\beta_\infty(\phi)=1/2$ (see \cite{May90}).
By Theorem \ref{minimizer-unique}, the equilibrium state for the potential $\phi$ is the unique minimizer
of the corresponding rate function.

For each integer $n\geq1$ and $x\in(0,1)\setminus\mathbb Q$,
let $\delta_x^n(T)$ denote the uniform probability distribution on $\{x,Tx,\ldots,T^{n-1}x\}$.
Write ${\rm Per}_n(T)=\{x\in(0,1)\setminus\mathbb Q\colon T^nx=x\}.$  
 The exponential instability of each point $x\in {\rm Per}_n(T)$ under the iteration of the Gauss map is quantified by its Lyapunov exponent $(1/n)\log|(T^n)'x|$.
It is well-known \cite{HarWri79,Khi64} that numbers in $\bigcup_{n=1}^\infty{\rm Per}_n(T)$ correspond to solutions of quadratic equations with integer coefficients, and 
their Lyapunov exponents are related to the rate of Diophantine approximation by the regular continued fraction expansion.

 \begin{prop}\label{equi-prop}
 As $n\to\infty$, the measure
$$\left(\sum_{x\in{\rm Per}_n(T)} |(T^n)'x|^{-1}\right)^{-1}\sum_{x\in{\rm Per}_n(T)}|(T^n)'x|^{-1}
\delta_x^n(T)$$
converges in the weak*-topology to the Gauss measure.
\end{prop}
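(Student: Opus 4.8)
The plan is to deduce Proposition \ref{equi-prop} from Theorem \ref{distribution} by transporting the statement on the countable full shift $X=\{1,2,\dots\}^{\mathbb N}$ through the continued fraction coding map. Let $\pi\colon X\to(0,1)$ send a sequence $a=(a_0,a_1,\dots)$ of positive integers to the irrational number with regular continued fraction expansion $1/(a_0+1/(a_1+\cdots))$. This map is a bijection onto $(0,1)\setminus\mathbb Q$, it is (uniformly) continuous because two sequences agreeing on their first $N$ symbols are sent into a common rank-$N$ cylinder interval whose length tends to $0$, and it conjugates the shift to the Gauss map, $\pi\circ\sigma=T\circ\pi$.

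First I would record the two identities that make the transport exact. Since $|T'(x)|=x^{-2}$ and $\phi=-\log|T'|\circ\pi$, the chain rule gives $S_n\phi(a)=-\log|(T^n)'(\pi(a))|$, hence $\exp S_n\phi(a)=|(T^n)'(\pi(a))|^{-1}$ for every $a\in X$ and $n\ge1$. Because $\pi$ is an injective conjugacy, $a\mapsto\pi(a)$ restricts to a bijection from ${\rm Per}_n(\sigma|_X)$ onto ${\rm Per}_n(T)$ (a purely periodic continued fraction corresponds precisely to a quadratic irrational fixed by the relevant power of $T$), and $\pi$ carries the orbit $\{a,\sigma a,\dots,\sigma^{n-1}a\}$ onto $\{x,Tx,\dots,T^{n-1}x\}$ with $x=\pi(a)$, so $\pi_*\delta_a^n=\delta_{\pi(a)}^n(T)$. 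Combining these, the pushforward under $\pi$ of the shift measure appearing in Theorem \ref{distribution} is \emph{exactly} the measure in Proposition \ref{equi-prop}.

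It then remains to invoke the convergence. As recalled in the excerpt, $\phi$ is summable with summable variations and $\beta_\infty(\phi)=1/2<1$, so Theorem \ref{distribution} applies and the shift measure converges weak* to the equilibrium state $\mu_\phi$, whose image $\pi_*\mu_\phi$ is the Gauss measure $\frac{1}{\log2}\frac{dx}{1+x}$. Since $\pi$ is continuous with values in the compact interval $[0,1]$, for every $f\in C([0,1])$ the function $f\circ\pi$ is bounded and continuous on $X$, whence weak* convergence is preserved under $\pi_*$; thus the measures in Proposition \ref{equi-prop} converge weak* to the Gauss measure.

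The point requiring the most care is this last transfer of weak* convergence across a \emph{non-compact} coding space: one must rule out escape of mass to infinity under $\pi$. This is already guaranteed, however, because Theorem \ref{distribution} delivers genuine weak* convergence of probability measures on $X$ (its tightness being a consequence of the exponential tightness noted after Theorem \ref{Tak}), so pushing forward by the continuous map $\pi$ produces honest weak* convergence of probability measures on $[0,1]$ with no loss of mass. A minor bookkeeping step I would also make explicit is that $\pi$ is injective only on irrationals, so that the normalizing sums match term by term under the bijection of periodic points; this is immediate since ${\rm Per}_n(T)\subset(0,1)\setminus\mathbb Q$ by definition.
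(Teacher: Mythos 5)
Your proof is correct and takes essentially the same route as the paper: the paper treats Proposition \ref{equi-prop} as an immediate application of Theorem \ref{distribution}, using exactly the facts it recalls beforehand (the full-shift coding of $T$, summability and summable variations of $\phi=-\log|T'|$ in the coding, $\beta_\infty(\phi)=1/2<1$, and the identification of the Gauss measure with the unique equilibrium state). Your write-up merely makes explicit the conjugacy bookkeeping the paper leaves implicit --- the identity $\exp S_n\phi(a)=|(T^n)'(\pi(a))|^{-1}$, the bijection of periodic points, the pushforward of empirical measures, and the preservation of weak* convergence under the continuous coding map --- all of which is accurate.
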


\subsection{Finitely primitive uniformly expanding Markov maps}
Examples satisfying the key condition $\beta_\infty(\phi)<1$ in Theorem \ref{minimizer-unique} and Theorem \ref{distribution}
are in abundance. We show this by linking this condition to another on dimension of invariant sets
of Markov interval maps. 

   Let $f\colon\varDelta\to [0,1]$ satisfy the following:

 \begin{itemize}
 
\item
   there exists a family $\{\varDelta_a\}_{a\in \mathbb N}$ 
of intervals with disjoint interiors
such that $\varDelta=\bigcup_{a\in \mathbb N}\varDelta_a$
and $f|_{\varDelta_a}=f_a$ for each $a\in \mathbb N$.

 \item for each $a\in \mathbb N$, $f_a$ is a $C^1$ diffeomorphism onto its image
 with appropriate one-sided derivatives.

 \item if $a,b\in \mathbb N$ and $f\varDelta_a\cap \varDelta_b$ 
 has non-empty interior,
 then $f\varDelta_a\supset \varDelta_b$.
 
 \end{itemize}
 The maximal invariant set of $f$ is defined by
$$J= \bigcap_{n=0}^{\infty}f^{-n}\varDelta.$$ 
Let $\dim_HJ\in[0,1]$ denote the Hausdorff dimension of $J$.
We assume (some iterate of) $f$ is uniformly expanding, i.e.,
  there exists $\kappa>1$ such that
 $|f'|>\kappa$ on $\varDelta$. Then, $f$ is modeled by the countable Markov shift
 whose transition matrix is determined by the Markov partition $\{\varDelta_a\}_{a\in \mathbb N}$.
 We moreover assume the Markov shift is finitely primitive, and $-\log|f'|$ induces a summable function $\phi$ with summable variations. By
  \cite[Theorem 4.2.13]{MauUrb03} we have
$$\dim_HJ=\inf\{\beta\geq0\colon P(\beta\phi)<0\}.$$
From this and \cite[Proposition 2.1.9]{MauUrb03} we obtain $$\beta_\infty(\phi)\leq\dim_HJ.$$
Therefore,  the condition $\dim_HJ<1$ implies $\beta_\infty(\phi)<1$.
In particular, all periodic points of $f$ weighted with their Lyapunov exponents equidistribute as the periods tend to infinity 
as in Proposition \ref{equi-prop}.

\subsection{Bowen-Series maps for free Fuchsian groups}   
We give another application of Theorem \ref{distribution} to cuspidal windings for geodesic flows on hyperbolic surfaces. 
Our presentation below 
is essentially a compressed version of \cite[Section 2]{JaeTak}.
For more details, see there and the references therein.

Let $\mathbb D\subset \mathbb R^2$ denote the open unit disk around the origin
and $\mathbb S^1$ the boundary of $\mathbb D$.
 {\it A Fuchsian group} is a discrete subgroup of M\"obius transformations preserving $\mathbb D$.
Let $G$ be a finitely generated free Fuchsian group with parabolic elements.
We denote by $\Lambda(G)\subset \mathbb S^1$ the limit set of  $G$.
Fix a Dirichlet fundamental domain $R\subset\mathbb{D}$ for $G$.
Each side $s$ of $R$ gives rise to a side-pairing transformation
$g_{s}\in G$, and the set 
\[
G_{0}=\left\{ g_{s}\colon s\text{ }\text{side of }R\right\} \subset G
\]
is a symmetric set of generators of $G$. Since $G$ is free,  each vertex of $R$ 
is in $\mathbb S^1$ and it is a parabolic
fixed point of some element of $G$. For simplicity we assume 
the vertex of $R$ is unique, denoted by $p$,
and there exists $\gamma_0\in G_0$ such that $\gamma_0(p)=p$.  We define 
\[
\Gamma_0=\{ \gamma_0^{\pm 1}  \}\ \text{ and }\ H_{0}=G_{0}\setminus\Gamma_{0}.
\]
Each side $s$ of $R$ is contained  in the isometric
circle of $g_s$.
For $g\in G_0$ define
\[
\varDelta_{g}=\mathbb S^1\cap \left\{ z\in\mathbb{R}^{2}\colon |g'(z)|\ge1\right\},
\]
and put $\varDelta=\bigcup_{g\in G_{0}}\varDelta_{g}$.
Following \cite{BowSer79} we introduce the Bowen-Series
map $f:\varDelta\rightarrow\mathbb S^1$ by 
\[
f|_{\varDelta_{g}}=g|_{\varDelta_{g}},\quad g\in G_{0}.
\]

The map $f$ determines a one-to-one correspondence 
between  $\Lambda(G)$ and a subset 
 of the set  $\{(\omega_n)_{n=0}^\infty\in G_0^{\mathbb N}
 \colon \omega_{n}\omega_{n+1}\neq1\in G\ \forall n\geq0\}$ 
 of symbolic sequences.
Let $\Lambda_c(G)$ denote the conical limit set of $G$. 
 For $x\in \Lambda_c(G)$, 
 its symbolic sequence $\omega$ is decomposed into a  sequence of blocks $(B_{i}(x))_{i\ge1}$ as in \cite{JaeKesMun16}.
Each element of $H_0$ in $\omega$ forms a block of length one. For elements of $\Gamma_0$
in $\omega$ we build maximal blocks of consecutive appearances of the
same element. Either  $B_i(x)=h$ for some $h\in H_0$, or $B_i(x)=\gamma_0^n$ holds
for some $n\in\mathbb Z\setminus\{0\}$. 

This decomposition has the following geometric interpretation.
For a fixed initial value $o\in \mathbb D/G$, the set of closed geodesics $\gamma:[0,\infty)\rightarrow \mathbb D/G$ satisfying $\gamma(0)=o$ can be identified with 
(not necessarily prime) periodic orbits of $f$ in $\Lambda_c(G)$.
The  symbolic sequence $\omega$
 records the sides of $R$ crossed by $\gamma(t)$ consecutively as $t\rightarrow \infty$.
A block $B_i(x)=\gamma_0^{\pm n}$ of length $n\ge2$ means that the geodesic $\gamma$  spirals
$n-1$ times around the cusp $\gamma_0$. 



For each reduced word
$\omega=\omega_0\cdots\omega_{n-1}\in G_0^n$ of length $n\geq2$, let
$\varDelta_\omega$ denote the domain of the diffeomorphism $f|_{\varDelta_{\omega_{n-1}}}\circ\cdots\circ f|_{\varDelta_{\omega_0}}$.
We now define an induced map
 $\tilde{f}:\bigcup_{\omega\in F}\varDelta_{\omega}\rightarrow\mathbb{S}^1$
 by 
\[
\tilde{f}x=f^{|B_{1}(x)|}x,
\]
where  $|B_1(x)|$ denotes the word length of the block $B_1(x)$, and 
$F$ is the countable set of reduced words in $\bigcup_{n=2}^\infty G_{0}^n$  given by
\[
F=\bigcup_{n=1}^\infty\left\{ \gamma^{n}g\colon
\gamma\in\Gamma_{0},g\in G_{0}\setminus\left\{ \gamma^{\pm1}\right\} \right\} \cup\left\{ hg\colon h\in H_{0},\,\,g\in G_{0}\setminus\left\{ h^{-1}\right\} \right\} .
\]
The maximal invariant set of $\tilde{f}$ is $\Lambda_{c}(G)$, and
 $B_i(x)=B_1(\tilde f^{i-1} x)$ holds for all $x\in\Lambda_c(G)$ and $i\geq1$.
 The $\tilde f$ determines a transition matrix and an associated countable Markov shift
$X$ with symbols in $F$. Since $\tilde f$ is uniformly expanding, 
there is a conjugacy $\pi\colon X\to \Lambda_c(G)$ with $\tilde f\circ\pi=\pi\circ\sigma|_X$.
Moreover, $\tilde f$-invariant Borel probability measures and shift-invariant ones are in one-to-one correspondence.
For each $n\geq1$ and 
 $x\in \Lambda_c(G)$ let $\delta_x^n(\tilde f)$ denote the uniform probability distribution 
 on $\{x,\tilde f x,\ldots,\tilde f^{n-1}x\}$.
Write
${\rm Per}_n(\tilde f)=\{x\in\Lambda_c(G)\colon B_i(x)=B_{i+n}(x)\ \forall i\geq1\}.$
For each prime periodic point $x\in {\rm Per}_n(\tilde f)$ of $f$, the total number of cuspidal windings of
the corresponding closed geodesic is $-n+\sum_{i=1}^n|B_i(x)|$.

 \begin{prop} Assume $f$ is topologically mixing. 
As $n\to\infty$, the measure 
$$\left(\sum_{x\in{\rm Per}_n(\tilde f)}\exp\left(-\sum_{i=1}^n|B_i(x)|\right)\right)^{-1}\sum_{x\in{\rm Per}_n(\tilde f)}\exp\left(-\sum_{i=1}^n|B_i(x)|\right)
\delta_x^n(\tilde f)$$
converges in the weak*-topology to the Borel probability measure on $\Lambda_c(G)$ 
corresponding to the equilibrium state for the potential $-|B_1\circ\pi|$.
\end{prop}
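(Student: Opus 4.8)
The plan is to view the displayed measure as the push-forward under the conjugacy $\pi\colon X\to\Lambda_c(G)$ of the weighted periodic-point measure on the countable Markov shift $X$ built from the potential $\phi=-|B_1\circ\pi|$, and then to invoke Theorem \ref{distribution} on $X$. Accordingly, the first task is to check that $(X,\sigma|_X,\phi)$ satisfies the hypotheses of Theorem \ref{distribution}: that $X$ is finitely primitive and that $\phi$ is summable with summable variations and $\beta_\infty(\phi)<1$.

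Verifying the conditions on $\phi$ should be straightforward. Since the first block $B_1(\pi x)$ is determined by the first symbol $x_0\in F$, the function $\phi$ is constant on each $1$-cylinder, hence on every cylinder; therefore every variation $\sup_{\omega\in E^n}\sup_{x,y\in[\omega]}(\phi(x)-\phi(y))$ vanishes and $\phi$ has summable variations. For summability I would compute
\[
Z_1(\beta\phi)=\sum_{\omega\in F}\exp\bigl(-\beta|B_1|_\omega\bigr),
\]
where $|B_1|_\omega$ is the block length encoded by the symbol $\omega$. The finitely many symbols $hg$ with $h\in H_0$ contribute a bounded term, while for each $n\geq1$ the symbols $\gamma^ng$ with $\gamma\in\Gamma_0$ are fixed in number and carry block length $n$; hence $Z_1(\beta\phi)$ is dominated by a constant multiple of $\sum_{n\geq1}e^{-\beta n}$. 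This geometric series converges for every $\beta>0$, so $\phi$ is summable and $\beta_\infty(\phi)=0<1$.

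The finite primitivity of $X$ is where I expect the real work to lie. The coding shows that $X$ is finitely irreducible, because the word-compatibility constraints in the free group $G$ are local and any two symbols of $F$ can be joined through a fixed finite family of linking words. The standing assumption that $f$ is topologically mixing should then upgrade finite irreducibility to finite primitivity: through the block coding the mixing of $f$ passes to $\tilde f$ on $\Lambda_c(G)$ and, via the conjugacy $\pi$, to $\sigma|_X$, and for a finitely irreducible countable Markov shift topological mixing permits the linking words to be chosen of a common length (cf. \cite{JaeTak}). Carrying out this combinatorial passage carefully is the main obstacle of the proof.

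With the hypotheses in place, Theorem \ref{distribution} gives that the symbolic measures $\bigl(\sum_{z}\exp S_n\phi(z)\bigr)^{-1}\sum_{z}\exp S_n\phi(z)\,\delta_z^n$, the sums running over ${\rm Per}_n(\sigma|_X)$, converge in the weak*-topology to the equilibrium state $\mu_\phi$ for $\phi$. To finish I would transport this statement through $\pi$. The identity $\tilde f\circ\pi=\pi\circ\sigma|_X$ makes $\pi$ a bijection of ${\rm Per}_n(\sigma|_X)$ onto ${\rm Per}_n(\tilde f)$, and using $B_i=B_1\circ\tilde f^{\,i-1}$ one obtains
\[
S_n\phi(z)=\sum_{i=0}^{n-1}\phi(\sigma^i z)=-\sum_{i=0}^{n-1}\bigl|B_1(\tilde f^{\,i}\pi z)\bigr|=-\sum_{i=1}^{n}\bigl|B_i(\pi z)\bigr|,
\]
together with $\pi_*\delta_z^n=\delta_{\pi z}^n(\tilde f)$. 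Consequently the push-forward under $\pi$ of the symbolic measure is precisely the measure in the statement. As $\pi$ is a homeomorphism, $\pi_*$ is continuous for the weak*-topology, so these measures converge to $\pi_*\mu_\phi$, which is the Borel probability measure on $\Lambda_c(G)$ corresponding to the equilibrium state for $-|B_1\circ\pi|$. This proves the proposition.
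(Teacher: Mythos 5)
Your proposal is correct and takes essentially the route the paper intends: the proposition is stated there without proof, as a direct application of Theorem \ref{distribution} to the shift $X$ with potential $\phi=-|B_1\circ\pi|$, with the conjugacy $\pi$, the structure of the alphabet $F$, and the finite primitivity/irreducibility facts drawn from the setup compressed out of \cite{JaeTak}. Your explicit verifications (constancy of $\phi$ on $1$-cylinders giving summable variations, the geometric-series bound giving summability and $\beta_\infty(\phi)=0<1$, the bijection $\pi\colon{\rm Per}_n(\sigma|_X)\to{\rm Per}_n(\tilde f)$ together with $S_n\phi(z)=-\sum_{i=1}^n|B_i(\pi z)|$ and $\pi_*\delta_z^n=\delta_{\pi z}^n(\tilde f)$) supply exactly the details the paper omits, and your deferral of the finite-primitivity combinatorics under the mixing hypothesis to \cite{JaeTak} mirrors what the paper itself does.
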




\subsection{LDPs for countable Markov shifts}
We now give a precise formulation of the LDP in our context. 
Let $X$ be topological Markov shift and $\phi\colon X\to\mathbb R$ a measurable function such that $\sup\phi<\infty$
and $P(\phi)<\infty$.
The (minus of the) free energy $F \colon \mathcal M \to [-\infty, 0]$ is defined by
\begin{equation*}\label{ef}
\begin{split}F(\nu)
=
\begin{cases}-P(\phi)+h(\nu)+\int\phi d\nu &\text{ if $\nu\in\mathcal M_\phi(\sigma|_X)$};\\ 
-\infty&\text{ otherwise.}\end{cases}
\end{split}
\end{equation*}
In the case $X$ is a topologically mixing finite Markov shift and $\phi$ is a H\"older continuous potential,
the LDP for empirical means was established in \cite{Kif90,Tak84}, and for 
weighted periodic points in \cite{Kif94}.  In these settings the rate function is $-F$, and therefore
 the minimizer coincides with the Gibbs-equilibrium state for the potential $\phi$.
The same characterization of the rate function is no longer true 
for countable Markov shifts, as we explain below.

 Let $X$ be a finitely irreducible countable Markov shift and
 $\phi\colon X\to\mathbb R$ a summable function with summable variations.
 Several level-2 LDPs have been established in \cite{Tak18}.
Since the entropy is not upper semi-continuous \cite[p.774]{JMU14}, 
  $-F$ cannot be the rate function. The rate function $I\colon\mathcal M\to[0,\infty]$ in \cite{Tak18} is given by
 \begin{equation}\label{rf}I(\mu)=-\inf_{\mathcal G\ni\mu}\sup_\mathcal G F,\end{equation}
 where the infimum is taken over all open subsets $\mathcal G$ of $\mathcal M$ containing $\mu$.

 Let
 $\mu_\phi$ denote the shift-invariant Gibbs state for the potential $\phi$.
We introduce two sequences $\{\xi_n\}_{n=1}^\infty$, $\{\eta_n\}_{n=1}^\infty$ of 
Borel probability measures on $\mathcal M$: 
  
  \begin{itemize}
 
\item[]\noindent{1. (Empirical means)}  
for each $n\geq1$ and 
 $x\in X$ let $\delta_x^n$ denote the uniform probability distribution 
 on $\{x,\sigma x,\ldots,\sigma^{n-1}x\}$.
Then $\xi_n$ is the distribution of the random variable 
 $x\mapsto\delta_x^n$ on the probability space $(X,\mu_\phi)$;

\item[]\noindent{2. (Weighted periodic points)} 
for each integer $n\geq1$ define
$$\eta_n=\left(\sum_{x\in {\rm Per}_n(\sigma|_X)}\exp S_n\phi(x)\right)^{-1}
\sum_{x\in{\rm Per}_n(\sigma|_X)} \exp S_n\phi(x)\delta_{\delta_x^n},$$
where
 $\delta_{\delta_x^n}$ denotes the unit point mass at $\delta_x^n$.

\end{itemize}

 \begin{theorem}{\rm (\cite[Theorem A]{Tak18})}\label{Tak}
    Let $X$ be a finitely irreducible countable Markov shift and
 $\phi\colon X\to\mathbb R$ a summable function with summable variations.
  Then $\{\xi_n\}_{n=1}^\infty$ satisfies the LDP with 
the good rate function $I$.
 If $X$ is finitely primitive, then 
   $\{\eta_n\}_{n=1}^\infty$ satisfies the LDP
  with the same good rate function. \end{theorem}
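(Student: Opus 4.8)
The plan is to prove the LDP for $\{\xi_n\}_{n=1}^\infty$ through three ingredients---a limiting cumulant functional, a Chebyshev-type upper bound, and an ergodic lower bound---and then to transfer everything to $\{\eta_n\}_{n=1}^\infty$ in the finitely primitive case. The starting identity is $\int g\,d\delta_x^n=\frac1n S_ng(x)$, so that for a bounded function $g\colon X\to\mathbb R$ depending on finitely many coordinates (such functions suffice to generate the weak*-topology on $\mathcal M$),
\begin{equation*}
\frac1n\log\int_{\mathcal M}\exp\left(n\int g\,d\nu\right)d\xi_n(\nu)=\frac1n\log\int_X\exp\left(S_ng\right)d\mu_\phi.
\end{equation*}
Inserting the two-sided Gibbs inequality \eqref{Gibbs} replaces the integral against $\mu_\phi$ by $c^{\pm1}e^{-P(\phi)n}\sum_{\omega\in E^n}\sup_{[\omega]}\exp S_n(\phi+g)$, whose exponential growth rate is $P(\phi+g)$; bounded distortion from summable variations controls the passage from suprema to genuine values. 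Hence the limiting cumulant functional is $\Pi(g)=P(\phi+g)-P(\phi)$. By the variational principle \eqref{press0} the entropy is affine in $\nu$, so $-F$ is convex and its convex conjugate is $\Pi$; the biconjugate $\Pi^*$ is therefore the lower semicontinuous hull of $-F$, which is precisely the rate function $I$ of \eqref{rf}.

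For the \textbf{upper bound} I would run the standard Chebyshev estimate built on $\Pi$: for fixed $\mu$ and $g$, $\xi_n(\{\nu\colon\int g\,d\nu\ge\int g\,d\mu-\epsilon\})\le\exp(n(\Pi(g)-\int g\,d\mu+\epsilon)+o(n))$. Intersecting finitely many such half-space neighborhoods (for $\pm g$) exhausts a weak* neighborhood of $\mu$, and optimizing over the test functions gives $\inf_{\mathcal G\ni\mu}\limsup_n\frac1n\log\xi_n(\mathcal G)\le-I(\mu)$. A finite subcover then yields the upper bound with rate $I$ on compact sets; extending it to all closed sets is exactly where exponential tightness is needed.

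For the \textbf{lower bound}, since $I$ is the lower semicontinuous hull of $-F$, it suffices to prove the local estimate $\liminf_n\frac1n\log\xi_n(\mathcal G)\ge F(\mu)$ for $\mu$ in a class over which $F(\mu)$ is approximable. I would first take $\mu$ ergodic and supported on a finite irreducible subsystem: a Shannon--McMillan--Breiman / Katok count then produces roughly $e^{n(h(\mu)-o(1))}$ cylinders $[\omega]$ whose typical points $x$ satisfy both $\delta_x^n\in\mathcal G$ and $\frac1n S_n\phi(x)\approx\int\phi\,d\mu$, so the Gibbs lower bound in \eqref{Gibbs} gives $\xi_n(\mathcal G)=\mu_\phi(\{x\colon\delta_x^n\in\mathcal G\})\gtrsim e^{n(h(\mu)+\int\phi\,d\mu-P(\phi))}=e^{nF(\mu)}$. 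Passing to a general $\mu\in\mathcal M_\phi(\sigma|_X)$ then requires approximating it weak* by finitely supported ergodic measures $\mu_k$ with $F(\mu_k)\to F(\mu)$; arranging the simultaneous convergence of the entropies and of the integrals $\int\phi\,d\mu_k$ is a specifically countable-shift issue, handled by finite-subsystem exhaustion together with the summability of $\phi$.

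The \textbf{main obstacle} is exponential tightness, forced by the non-compactness of $X$; it is what makes $I$ a \emph{good} rate function and what upgrades the upper bound from compact to closed sets. One must exhibit, for each $L>0$, a uniformly tight (hence weak*-precompact, by Prohorov's theorem) family $\mathcal K_L\subset\mathcal M$ with $\limsup_n\frac1n\log\xi_n(\mathcal M\setminus\mathcal K_L)\le-L$. The candidate sets bound the mass an empirical measure may place on cylinders with large leading symbol; summability bounds $\sum_{a}\sup_{[a]}e^{\phi}$, and fed through \eqref{Gibbs} this forces an exponentially small $\mu_\phi$-probability that $\delta_x^n$ escapes such a set. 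Finally, for $\{\eta_n\}$ under finite primitivity, I would show that its normalized cumulant $\frac1n\log\sum_{x\in\mathrm{Per}_n(\sigma|_X)}e^{S_n(\phi+g)(x)}-\frac1n\log\sum_{x\in\mathrm{Per}_n(\sigma|_X)}e^{S_n\phi(x)}$ has the same limit $\Pi(g)$, by comparing the periodic sums with the partition functions $Z_n(\phi+g)$ and $Z_n(\phi)$: finite primitivity supplies an admissible periodic point in each relevant $n$-cylinder and summable variations supply bounded distortion. The same Chebyshev upper bound, a periodic-orbit version of the ergodic lower bound, and the analogous exponential tightness then transfer the LDP with the good rate function $I$ to $\{\eta_n\}$.
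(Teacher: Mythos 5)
The paper does not prove Theorem \ref{Tak} at all: it is imported verbatim, with attribution, from \cite[Theorem A]{Tak18}, and in this paper it serves only as a black box feeding Corollary \ref{expdecay} and the proofs of Theorems \ref{minimizer-unique} and \ref{distribution}. So there is no internal proof to compare your attempt against; all one can assess is whether your sketch would stand on its own as a proof of the cited result.

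As such, your outline has the right skeleton --- the Gibbs inequality \eqref{Gibbs} converting the cumulant limit into the pressure difference $P(\phi+g)-P(\phi)$, a Chebyshev upper bound plus exponential tightness, an SMB-type lower bound, and a periodic-orbit comparison under finite primitivity are indeed the standard ingredients --- but the steps you dispatch in a clause each are exactly where the substance of \cite{Tak18} lies. (i) Your lower bound reduces to the claim that every $\mu\in\mathcal M_\phi(\sigma|_X)$ admits, inside any weak* neighborhood, an ergodic measure supported on a finite subsystem whose free energy is within $\epsilon$ of $F(\mu)$. In the countable-alphabet setting this is a theorem, not a routine ``finite-subsystem exhaustion'': entropy is not upper semi-continuous \cite[p.774]{JMU14}, restricting an invariant measure to a finite subshift is not a well-defined operation (note that the present paper's proof of Theorem \ref{ups} must instead use the symbol-collapsing projection $\pi_p$, and the companion paper \cite{Tak19} exists precisely for this approximation problem), and controlling $h(\mu)$ and $\int\phi\,d\mu$ simultaneously is the whole difficulty. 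Moreover your SMB/Katok count is applied to a countable generating partition which may have infinite partition entropy with respect to $\mu$, which needs justification. (ii) Your upper bound yields the rate $\Pi^*$, the Legendre transform of $g\mapsto P(\phi+g)-P(\phi)$ over bounded cylinder functions, and you identify $\Pi^*$ with $I$ of \eqref{rf} by biconjugation. That identification is not off the shelf: $\mathcal M$ is neither weak*-compact nor weak*-closed in the dual of the space of bounded continuous functions (mass can escape to a finitely additive limit), and a biconjugate computed over a restricted test class can a priori be strictly smaller than the lower semi-continuous hull of $-F$; one needs either a direct covering proof of the upper bound with rate $I$ or a carefully justified duality in a suitable pairing. (iii) Exponential tightness is asserted via candidate sets but never verified, and it is what makes $I$ good. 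In short: a plausible plan for reproving \cite[Theorem A]{Tak18}, but with the load-bearing lemmas named rather than proven.
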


From Theorem \ref{minimizer-unique} and Theorem \ref{Tak} we obtain an exponential decay.

        \begin{cor}\label{expdecay}
         Let $X$, $\phi$ be as in Theorem \ref{minimizer-unique}. 
        Then, for any weak*-open set $\mathcal G$ containing $\mu_\phi$, one has
        $$\varlimsup_{n\to\infty}\frac{1}{n}\log\mu_\phi\{x\in X\colon\delta_x^n\notin\mathcal G\}\leq-\inf_{\mathcal M\setminus\mathcal G} I<0.$$
        If moreover $X$ is finitely primitive, then
                $$\varlimsup_{n\to\infty}\frac{1}{n}\log
        \frac{\sum_{\stackrel{x\in{\rm Per}_n(\sigma|_X)}{\delta_x^n\notin\mathcal G}} \exp{S_n\phi(x)}}{\sum_{x\in{\rm Per}_n(\sigma|_X)} \exp{S_n\phi(x)}}\leq-\inf_{\mathcal M\setminus\mathcal G} I<0.$$

                \end{cor}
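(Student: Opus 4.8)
The plan is to obtain both estimates directly from the large deviation upper bound in Theorem \ref{Tak}, applied to the closed set $\mathcal M\setminus\mathcal G$, and then to sharpen the bound to a strict inequality by invoking the uniqueness of the minimizer from Theorem \ref{minimizer-unique} together with the goodness of the rate function.

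First I would rewrite the two quantities as evaluations of $\xi_n$ and $\eta_n$ on $\mathcal M\setminus\mathcal G$. Since $\xi_n$ is by definition the distribution of the map $x\mapsto\delta_x^n$ on $(X,\mu_\phi)$, we have $\mu_\phi\{x\in X\colon\delta_x^n\notin\mathcal G\}=\xi_n(\mathcal M\setminus\mathcal G)$; likewise, the ratio in the second display equals $\eta_n(\mathcal M\setminus\mathcal G)$ straight from the definition of $\eta_n$. As $\mathcal G$ is weak*-open, $\mathcal M\setminus\mathcal G$ is closed, so the upper bound in the LDP of Theorem \ref{Tak} yields
$$\varlimsup_{n\to\infty}\frac{1}{n}\log\xi_n(\mathcal M\setminus\mathcal G)\leq-\inf_{\mathcal M\setminus\mathcal G}I,$$
and the analogous bound for $\eta_n$ when $X$ is finitely primitive. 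This already gives the first inequality in each of the two displays.

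It remains to establish the strict inequality $\inf_{\mathcal M\setminus\mathcal G}I>0$. I would argue by contradiction: were the infimum equal to $0$, there would exist $\mu_k\in\mathcal M\setminus\mathcal G$ with $I(\mu_k)\to0$. For large $k$ these $\mu_k$ lie in the sublevel set $\{I\leq1\}$, which is compact because $I$ is a good rate function; extracting a convergent subsequence $\mu_{k_j}\to\mu_*$, we find $\mu_*\in\mathcal M\setminus\mathcal G$ since this set is closed, while lower semi-continuity of $I$ forces $I(\mu_*)=0$. Thus $\mu_*$ is a minimizer, but Theorem \ref{minimizer-unique} identifies the unique minimizer with $\mu_\phi\in\mathcal G$, contradicting $\mu_*\notin\mathcal G$.

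The one point requiring care is this last step. Because $X$ is non-compact, $\mathcal M$ need not be weak*-compact, and the positivity of the infimum cannot be read off from compactness of the ambient space. It is precisely the goodness of $I$, i.e.\ the compactness of its sublevel sets guaranteed by Theorem \ref{Tak}, together with the uniqueness of the minimizer from Theorem \ref{minimizer-unique}, that drives the contradiction. Everything else is immediate from the definitions of $\xi_n$ and $\eta_n$.
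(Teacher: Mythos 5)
Your proof is correct and is exactly the argument the paper intends: the corollary is stated there without an explicit proof, merely as a consequence of Theorem \ref{Tak} (LDP upper bound applied to the closed set $\mathcal M\setminus\mathcal G$) and Theorem \ref{minimizer-unique}, and your contradiction argument via compact sublevel sets and lower semi-continuity of $I$ is the standard way to fill in the strict positivity $\inf_{\mathcal M\setminus\mathcal G}I>0$. Nothing is missing.
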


\section{Proofs of the main results}
 A proof of Theorem \ref{minimizer-unique} amounts to showing the next theorem which 
identifies minimizers as equilibrium states.
 
  \begin{theorem}\label{uniquemin}
  Let $X$ be a finitely irreducible countable Markov shift and
 $\phi\colon X\to\mathbb R$ a uniformly continuous summable function
 satisfying $\beta_\infty(\phi)<1$.  Assume there exists a Gibbs state for the potential $\phi$.
 Then, any minimizer of the rate function 
 is an equilibrium state for the potential $\phi$.
 \end{theorem}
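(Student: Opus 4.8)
The plan is to exploit the variational representation \eqref{rf} of the rate function together with a relative-entropy density computed against the assumed Gibbs state $\mu_\phi$. First I would unwind what $I(\mu)=0$ means: since $F\le 0$ by the variational principle (under these hypotheses the Gibbs constant in \eqref{Gibbs} coincides with $P(\phi)$), the representation \eqref{rf} gives $I(\mu)=-\limsup_{\nu\to\mu}F(\nu)$, so $I(\mu)=0$ is equivalent to the existence of a sequence $\nu_k\to\mu$ in the weak*-topology with $F(\nu_k)\to 0$, that is $h(\nu_k)+\int\phi\,d\nu_k\to P(\phi)$. As $\mathcal M(\sigma|_X)$ is closed and $F(\nu_k)\to 0>-\infty$, each $\nu_k$ lies in $\mathcal M_\phi(\sigma|_X)$ and the limit $\mu$ is shift-invariant. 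The goal then becomes to promote this approximate identity to the exact one $F(\mu)=0$.

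The first substantive step, and the one where the hypothesis $\beta_\infty(\phi)<1$ is indispensable, is to show $\mu\in\mathcal M_\phi(\sigma|_X)$, i.e. $\int\phi\,d\mu>-\infty$. Fixing $\beta\in(\beta_\infty(\phi),1)$, the function $\beta\phi$ is summable, so $P(\beta\phi)<\infty$, and the variational principle gives $h(\nu_k)+\beta\int\phi\,d\nu_k\le P(\beta\phi)$. Subtracting the near-equilibrium relation $h(\nu_k)+\int\phi\,d\nu_k=P(\phi)+o(1)$ yields the uniform bound $\int(-\phi)\,d\nu_k\le (P(\beta\phi)-P(\phi))/(1-\beta)+o(1)$. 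Since $-\phi$ is lower semicontinuous and bounded below (as $\sup\phi<\infty$), the map $\nu\mapsto\int(-\phi)\,d\nu$ is weak*-lower semicontinuous, whence $\int(-\phi)\,d\mu\le\liminf_k\int(-\phi)\,d\nu_k<\infty$. This is precisely the integrability that fails in the Remark's example with $\beta_\infty(\phi)=1$, where the Gibbs state is a minimizer yet has $\int\phi\,d\mu_\phi=-\infty$.

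The technical core is to overcome the failure of upper semicontinuity of the entropy. I would introduce the finite-level relative entropies $g_n(\nu)=\tfrac1n\sum_{\omega\in E^n}\nu[\omega]\log(\nu[\omega]/\mu_\phi[\omega])$, and establish two properties. First, each $g_n$ is weak*-lower semicontinuous: cylinders are clopen, so $\nu\mapsto\nu[\omega]$ is continuous, and since $p\log(p/q)\ge -q/e$ with $\sum_\omega\mu_\phi[\omega]=1$, the countable sum is a supremum of finite continuous partial sums up to an additive constant. Second, the Gibbs bound \eqref{Gibbs} forces a uniform distortion estimate — comparing two points of $[\omega]$ in \eqref{Gibbs} bounds the oscillation of $S_n\phi$ on $[\omega]$ by $2\log c$ — which gives an expansion of $\sum_{\omega}\nu[\omega]\log\mu_\phi[\omega]$ producing $g_n(\nu)=-\tfrac1n H_n(\nu)+P(\phi)-\int\phi\,d\nu+O(1/n)$ with $O(1/n)$ uniform in both $n$ and $\nu\in\mathcal M_\phi(\sigma|_X)$. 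Using $\tfrac1n H_n(\nu)\ge h(\nu)$ this yields the one-sided uniform bound $g_n(\nu)\le -F(\nu)+c/n$, while for $\nu\in\mathcal M_\phi(\sigma|_X)$ letting $n\to\infty$ recovers $g_n(\nu)\to -F(\nu)$.

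Assembling these, for each fixed $n$ the lower semicontinuity of $g_n$ and the uniform bound give $g_n(\mu)\le\liminf_k g_n(\nu_k)\le\liminf_k(-F(\nu_k))+c/n=c/n$. Since $\mu\in\mathcal M_\phi(\sigma|_X)$ by the second step, letting $n\to\infty$ yields $-F(\mu)=\lim_n g_n(\mu)\le 0$; as $-F(\mu)\ge 0$ by the variational principle, I conclude $F(\mu)=0$, i.e. $\mu$ is an equilibrium state for $\phi$. I expect the main obstacle to be the uniform-in-$\nu$ control in the relative-entropy estimate of the third step, which is exactly what converts the merely approximate near-equilibrium data at the $\nu_k$ into an exact statement at the limit $\mu$; the need to secure the integrability of $\mu$ separately in the second step is the structural reason that the hypothesis $\beta_\infty(\phi)<1$ cannot be dropped.
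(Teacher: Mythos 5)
Your proof is correct in its essentials, but it takes a genuinely different route from the paper's. The paper never uses the Gibbs state in the core argument: it reduces Theorem \ref{uniquemin} to a limited upper semicontinuity statement (Theorem \ref{ups}) for the ratio $h(\mu)/(P(\phi)-\int\phi\,d\mu)$, which is proved by projecting measures onto the finite subshifts $\Sigma_p$ (collapsing all symbols larger than $p$ to $p$), controlling the entropy and potential lost in the tails via Proposition \ref{approx-full} and Lemma \ref{modify1} --- a second, essential use of $\beta_\infty(\phi)<1$ --- and then passing to $q$-block codings to handle potentials that are not constant on $1$-cylinders. You instead take the assumed Gibbs state $\mu_\phi$ as a reference measure and work with the finite-level relative entropies $g_n$, exploiting three facts: their weak*-lower semicontinuity (valid because each $\mu_\phi[\omega]>0$ is a fixed constant and $p\log(p/q)\geq -q/e$), the distortion bound $|S_n\phi(x)-S_n\phi(y)|\leq 2\log c$ on $n$-cylinders that the Gibbs inequality \eqref{Gibbs} itself forces, and $\frac1n H_n(\nu)\geq h(\nu)$. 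Because the Gibbs property packages entropy and potential together at every scale, you need neither the two-case analysis nor any tail-entropy control: in your argument $\beta_\infty(\phi)<1$ enters exactly once, to secure $\int\phi\,d\mu>-\infty$ for the limit measure, which the paper's Remark shows is precisely the step that can fail. The trade-off is that your argument genuinely requires the Gibbs state (which the theorem does assume), whereas the paper's Theorem \ref{ups} is a statement about arbitrary weak*-convergent sequences of measures and is of independent interest.

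Two small points should be repaired in the write-up, neither of which is a genuine gap. First, your parenthetical claim that the Gibbs constant $P$ in \eqref{Gibbs} coincides with $P(\phi)$ is not justified at this level of generality: for a merely uniformly continuous summable $\phi$ one gets immediately only $P(\phi)\leq\lim_n\frac1n\log Z_n(\phi)=P$, the reverse inequality being part of a variational principle you have not proved. Fortunately you never need it: carry the Gibbs constant $P$ through the expansion, $g_n(\nu)=-\frac1n H_n(\nu)+P-\int\phi\,d\nu+O(1/n)$, and note that in the final assembly the comparison of $\lim_n g_n(\mu)=P-h(\mu)-\int\phi\,d\mu$ with the bound $\liminf_k g_n(\nu_k)\leq P-P(\phi)+c/n$ makes $P$ cancel, leaving $h(\mu)+\int\phi\,d\mu\geq P(\phi)$ exactly as desired (likewise, $F\leq 0$ follows from the definition \eqref{press0} of $P(\phi)$ as a supremum, with no appeal to the Gibbs constant). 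Second, you should record that $H_n(\nu)<\infty$ for every $\nu\in\mathcal M_\phi(\sigma|_X)$, so that the inequality $\frac1n H_n(\nu)\geq h(\nu)$ and the convergence $g_n(\mu)\to P-h(\mu)-\int\phi\,d\mu$ are legitimate; this follows from summability via $H_1(\nu)\leq \log Z_1(\phi)-\int\phi\,d\nu$, subadditivity, and the Kolmogorov--Sinai theorem for the generating $1$-cylinder partition.
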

 We finish the proof of Theorem \ref{minimizer-unique} and Theorem \ref{distribution} assuming Theorem \ref{uniquemin}.
 
 \subsection*{Proof of Theorem \ref{minimizer-unique}}
   Let $X$ be finitely irreducible and
 $\phi\colon X\to\mathbb R$ be summable with summable variations satisfying $\beta_\infty(\phi)<1$.
By \cite[Theorem 1.5]{MauUrb03} the variational principle holds, and by \cite[Theorem 2.2.4]{MauUrb03}
there exists a unique shift-invariant Gibbs state for the potential $\phi$ which we denote by $\mu_\phi$. 
     
  Fix $\delta\in(0,1-\beta_\infty(\phi))$.
  Since $\phi$ has summable variations, there exists a constant $C(\phi,\delta)>0$
 such that 
$\sup_{[k]}|\phi|\leq C(\phi,\delta)\sup_{[k]}\exp(-\delta \phi)$ for every $k\in\mathbb N$.
  Using this and \eqref{Gibbs} we obtain 
  \begin{align*}
  \sum_{k\in\mathbb N}\sup_{[k]}|\phi|\mu_\phi[k]
  &\leq ce^{-P(\phi)}\sum_{k\in\mathbb N} \sup_{[k]}\exp\phi\sup_{[k]}|\phi|\\
  &\leq
    ce^{-P(\phi)}C(\phi,\delta)
 Z_1\left((1-\delta)\phi\right)<\infty,
  \end{align*}
  and thus $\int\phi d\mu_\phi>-\infty$.
  By \cite[Theorem 2.2.9]{MauUrb03}, $\mu_\phi$ is the unique equilibrium state for the potential $\phi$.
From this and Theorem \ref{uniquemin}, the minimizer is unique and it is $\mu_\phi$.
  \qed
  
  \subsection*{Proof of Theorem \ref{distribution}}
   Let $X$ be finitely primitive and
 $\phi\colon X\to\mathbb R$ be summable with summable variations satisfying
   $\beta_\infty(\phi)<1$. 
Let $p_n$ denote the Borel probability measure on $X$ in the statement of Theorem \ref{distribution}.
For an arbitrary bounded continuous function $\varphi\colon X\to\mathbb R$,
  a direct calculation gives
\begin{align*}\int\varphi dp_{n}&=\frac{1}{n}
\sum_{i=0}^{n-1}\int\varphi d\left(
p_{n}\circ(\sigma|_X)^{-i}\right)\\
&=\frac{1}{n}\left(\sum_{x\in {\rm Per}_{n}(\sigma|_X)}\exp S_{n}\phi(x)\right)^{-1}
\sum_{x\in{\rm Per}_{n}(\sigma|_X)} \exp S_{n}\phi(x)\sum_{i=0}^{n-1}\varphi(\sigma^ix)\\
&=\left(\sum_{x\in {\rm Per}_{n}(\sigma|_X)}\exp S_{n}\phi(x)\right)^{-1}
\sum_{x\in{\rm Per}_{n}(\sigma|_X)} \exp S_{n}\phi(x)\int\varphi d\delta_x^{n}\\
&=\int\left(\int \varphi d\mu \right)d\eta_{n}(\mu).
\end{align*}
By Theorem \ref{Tak}, the sequences
 $\{\eta_n\}_{n=1}^\infty$ satisfy the LDP
 with the good rate function $I$.
 Since the rate function is good and the minimizer is unique,
 $\{\eta_n\}_{n=1}^\infty$ converges in the weak*-topology to the unit point mass at the unique minimizer $\mu_\phi$.
Since
the functional $\mu\in\mathcal M\mapsto\int\varphi d\mu$ is bounded continuous, 
$\lim_{n\to\infty}\int\left(\int \varphi d\mu \right)d\eta_{n}(\mu)=\int\varphi d\mu_\phi$ holds, and thus
$\lim_{n\to\infty}\int\varphi dp_{n} =\int\varphi d\mu_\phi$
as required. \qed

\subsection{Preliminaries}
The rest of this paper is dedicated to a proof of Theorem \ref{uniquemin}.
If not stated otherwise, $X$ denotes a general countable Markov shift.

\begin{lemma}\label{modify1}
Let $\phi\colon X\to\mathbb R$ be a summable 
function.
For any $\delta>0$ there exists a constant $K(\delta)\in\mathbb R$ such that if $\mu\in\mathcal M_\phi(\sigma|_X)$
satisfies $-h(\mu)/\int\phi d\mu>\beta_\infty(\phi)+\delta$ then $ \int\phi d\mu\geq K(\delta)$.
\end{lemma}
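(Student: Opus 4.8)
The plan is to combine a lower bound on the entropy $h(\mu)$ coming from the hypothesis with an upper bound coming from the summability of $\gamma\phi$ for a well-chosen exponent $\gamma$, exploiting that the two bounds are affine in $-\int\phi\,d\mu$ with different slopes.

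First I would reformulate the hypothesis. Since $\phi$ is summable, $\sup\phi<\infty$, so $\int\phi\,d\mu\in(-\infty,\sup\phi]$ for every $\mu\in\mathcal M_\phi(\sigma|_X)$. If $\int\phi\,d\mu\geq0$, the ratio $-h(\mu)/\int\phi\,d\mu$ is non-positive (or undefined), so it cannot exceed $\beta_\infty(\phi)+\delta>0$; hence the hypothesis forces $\int\phi\,d\mu<0$ and is equivalent to $h(\mu)>(\beta_\infty(\phi)+\delta)(-\int\phi\,d\mu)$. Writing $a=-\int\phi\,d\mu>0$, it then suffices to bound $a$ from above, since $\int\phi\,d\mu=-a$.

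Next I would fix $\gamma=\beta_\infty(\phi)+\delta/2$, so that $\beta_\infty(\phi)<\gamma<\beta_\infty(\phi)+\delta$. By the definition of $\beta_\infty(\phi)$ together with the monotonicity of summability recalled before Theorem~\ref{minimizer-unique}, the potential $\gamma\phi$ is summable, i.e.\ $Z_1(\gamma\phi)<\infty$. The elementary inequality $-t\log t\leq -t\log s+s-t$ (valid for $t\in[0,1]$, $s>0$), applied with $t=\mu[k]$ and $s=\sup_{[k]}\exp(\gamma\phi)/Z_1(\gamma\phi)$ and summed over $k$, yields
$$-\sum_k\mu[k]\log\mu[k]+\sum_k\mu[k]\sup_{[k]}\gamma\phi\leq\log Z_1(\gamma\phi).$$
Because $\{[k]\}_{k}$ is a generating partition, $h(\mu)\leq-\sum_k\mu[k]\log\mu[k]$, and because $\gamma>0$, $\sum_k\mu[k]\sup_{[k]}\gamma\phi\geq\gamma\int\phi\,d\mu$; together these give the upper bound $h(\mu)\leq\log Z_1(\gamma\phi)+\gamma a$, with $h(\mu)<\infty$ as a byproduct.

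Finally, combining the two bounds gives $(\beta_\infty(\phi)+\delta)a<\log Z_1(\gamma\phi)+\gamma a$, whence $(\delta/2)a<\log Z_1(\gamma\phi)$ and $a<(2/\delta)\log Z_1(\gamma\phi)$. Setting $K(\delta)=-(2/\delta)\log Z_1((\beta_\infty(\phi)+\delta/2)\phi)$, which depends only on $\phi$ and $\delta$ and not on $\mu$, we obtain $\int\phi\,d\mu=-a>K(\delta)$, as required. The step I expect to need the most care is the upper bound with the correct slope: one must pick $\gamma$ strictly between $\beta_\infty(\phi)$ and $\beta_\infty(\phi)+\delta$ so that $\gamma\phi$ stays summable while $\gamma$ remains below the slope $\beta_\infty(\phi)+\delta$ furnished by the hypothesis, and one must make sure the entropy is finite. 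The latter is not assumed a priori but falls out of the same termwise estimate, since $\mu\in\mathcal M_\phi(\sigma|_X)$ guarantees $\int\phi\,d\mu>-\infty$. Notably, no irreducibility of $X$ is used.
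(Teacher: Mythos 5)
Your proof is correct, and its skeleton coincides with the paper's: both arguments fix the intermediate exponent $\beta_0=\beta_\infty(\phi)+\delta/2$, establish an upper bound $h(\mu)+\beta_0\int\phi\,d\mu\leq C(\phi,\delta)<\infty$ valid for all $\mu\in\mathcal M_\phi(\sigma|_X)$, and then play this slope-$\beta_0$ bound against the slope-$(\beta_\infty(\phi)+\delta)$ lower bound on $h(\mu)$ coming from the hypothesis (which, as you observe, forces $\int\phi\,d\mu<0$). The only real divergence is how the finite constant $C(\phi,\delta)$ is produced. The paper takes $C(\phi,\delta)=P(\beta_0\phi)$, which dominates $h(\mu)+\beta_0\int\phi\,d\mu$ by the very definition \eqref{press0} of pressure, and then quotes \cite[Theorem 2.1.7]{MauUrb03} together with summability of $\beta_0\phi$ to get $P(\beta_0\phi)\leq\lim_{n\to\infty}\frac1n\log Z_n(\beta_0\phi)<\infty$. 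You instead prove the cruder but sufficient bound with $C(\phi,\delta)=\log Z_1(\beta_0\phi)$ from scratch, via the inequality $-t\log t\leq-t\log s+s-t$ applied to the $1$-cylinder partition; this amounts to reproving, at the level of a single block, the easy half of the variational principle that the paper cites. Your route buys self-containedness (no appeal to \cite{MauUrb03}) and makes the finiteness of $h(\mu)$ fall out of the estimate rather than being an issue; the paper's route buys brevity and a sharper constant, since $\lim_{n\to\infty}\frac1n\log Z_n(\beta_0\phi)\leq\log Z_1(\beta_0\phi)$ by subadditivity. One point to make explicit in a final write-up: when you sum the termwise inequality and split off $\sum_k\mu[k]\sup_{[k]}\gamma\phi$, you should note that this series converges absolutely (its positive part is controlled by $\sup\phi<\infty$, its negative part by $\int\phi\,d\mu>-\infty$), so the rearrangement is legitimate; this is routine and does not affect correctness. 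Your closing remark that no irreducibility is used also matches the paper, whose preliminaries treat this lemma for a general countable Markov shift.
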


\begin{proof}
For $\delta>0$ put $\beta_0=\beta_\infty(\phi)+\delta/2$.
 Then
$$h(\mu)+\beta_0\int\phi d\mu\leq P(\beta_0\phi)\leq \lim_{n\to\infty}\frac{1}{n}\log Z_n(\beta_0\phi)<\infty.$$
The second inequality follows from \cite[Theorem 2.1.7]{MauUrb03}, and the last one follows from
the summability gives of $\beta_0\phi$.
Put
$K(\delta)=\frac{P(\beta_0\phi)}{\beta_0-\beta_\infty(\phi)-\delta}$.
The first inequality and the assumption on $\mu$ give
$$(\beta_0-\beta_\infty(\phi)-\delta)\int\phi d\mu\leq P(\beta_0\phi),$$
and therefore
$\int\phi d\mu\geq K(\delta)$ as required.
\end{proof}


\begin{lemma}\label{div}
Let $\phi\colon X\to\mathbb R$ be a summable function
satisfying $\beta_\infty(\phi)<1.$
Let $\{\mu_k\}_{k=1}^\infty$ be a sequence in $\mathcal M_\phi(\sigma|_X)$ such that 
$\{F(\mu_k)\}_{k=1}^\infty$ converges to a finite number as $k\to\infty$. Then
$\inf_{k\geq1}\int\phi d\mu_k>-\infty.$
\end{lemma}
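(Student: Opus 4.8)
The plan is to argue by contradiction, using the convergence of the free energies to tie the entropies and the integrals together, and then to invoke Lemma \ref{modify1}. First I would extract from the hypothesis the one piece of boundedness it provides: since $F(\mu_k)=-P(\phi)+h(\mu_k)+\int\phi\,d\mu_k$ converges to a finite limit, the quantity $h(\mu_k)+\int\phi\,d\mu_k$ converges to a finite number, say $M$, and in particular stays bounded in $k$.

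Next, suppose toward a contradiction that $\inf_{k\geq1}\int\phi\,d\mu_k=-\infty$, and pass to a subsequence (still denoted $\{\mu_k\}$) along which $\int\phi\,d\mu_k\to-\infty$. Writing $a_k=-\int\phi\,d\mu_k\to+\infty$, the boundedness of $h(\mu_k)+\int\phi\,d\mu_k$ forces $h(\mu_k)=a_k+M+o(1)\to+\infty$, and therefore
$$\frac{-h(\mu_k)}{\int\phi\,d\mu_k}=\frac{h(\mu_k)}{a_k}=1+\frac{M+o(1)}{a_k}\longrightarrow 1.$$

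Since $\beta_\infty(\phi)<1$, I would then fix $\delta\in(0,1-\beta_\infty(\phi))$, so that $\beta_\infty(\phi)+\delta<1$. Because the ratio displayed above tends to $1$, for all sufficiently large $k$ we have $-h(\mu_k)/\int\phi\,d\mu_k>\beta_\infty(\phi)+\delta$. Lemma \ref{modify1} then yields the uniform lower bound $\int\phi\,d\mu_k\geq K(\delta)$ for those $k$, which contradicts $\int\phi\,d\mu_k\to-\infty$. This contradiction establishes $\inf_{k\geq1}\int\phi\,d\mu_k>-\infty$, as required.

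The main obstacle, and really the only subtle point, is the passage from the hypothesis to the assertion that the ratio $-h(\mu_k)/\int\phi\,d\mu_k$ approaches $1$ rather than merely staying above $\beta_\infty(\phi)+\delta$; this is precisely what the convergence of $F$ buys us, since it pins $h(\mu_k)$ to $-\int\phi\,d\mu_k$ up to an additive constant. Everything else is bookkeeping: one checks that $\int\phi\,d\mu_k$ is eventually negative (automatic along the divergent subsequence, which makes the ratio legitimately a quotient of positive quantities) and that the strict inequality $\beta_\infty(\phi)<1$ leaves room for a positive $\delta$. Note that the summability of $\phi$ enters only implicitly, through Lemma \ref{modify1}, where it guarantees the finiteness of $K(\delta)$.
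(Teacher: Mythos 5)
Your proof is correct and follows essentially the same route as the paper: assume $\inf_k\int\phi\,d\mu_k=-\infty$, extract a subsequence along which $\int\phi\,d\mu_k\to-\infty$ and $-h(\mu_k)/\int\phi\,d\mu_k\to1$, and derive a contradiction from Lemma \ref{modify1} with $\delta\in(0,1-\beta_\infty(\phi))$. The only difference is that you spell out explicitly why the ratio tends to $1$ (the convergence of $F(\mu_k)$ pins $h(\mu_k)$ to $-\int\phi\,d\mu_k$ up to a bounded additive term), a step the paper leaves implicit.
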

\begin{proof}
If the infimum is $-\infty$, then
 it is possible to take a subsequence $\{\mu_{k_i}\}_{i=1}^\infty$ of $\{\mu_k\}_{k=1}^\infty$ such that
$\int\phi d\mu_{k_i}\to-\infty$ and
$-h(\mu_{k_i})/\int\phi d\mu_{k_i}\to1$ as $i\to\infty$. 
Fix $\delta\in(0,1-\beta_\infty(\phi))$. 
Since $-h(\mu_{k_i})/\int\phi d\mu_{k_i}>\beta_\infty(\phi)+\delta$ holds for sufficiently large $i$, we 
obtain a contradiction to Lemma \ref{modify1}.
\end{proof}

To prove Theorem \ref{uniquemin}, we take a sequence of measures converging to a minimizer,
and evaluate the free energies along this sequence. The main difficulty is the lack of upper semi-continuity of entropy. 
To overcome this we show a limited form of upper semi-continuity in the following form.

\begin{theorem}\label{ups}
Let $\phi\colon X\to\mathbb R$ be a uniformly continuous summable function satisfying
 $\beta_\infty(\phi)<1$.
 Let $\{\mu_j\}_{j=1}^\infty$ be a sequence in $\mathcal M_\phi(\sigma|_X)$ which converges to $\mu_0\in\mathcal M_\phi(\sigma|_X)$
 in the weak*-topology as $j\to\infty$. Assume
$P(\phi)-\int\phi d\mu_0>0$. 
If  \begin{equation}\label{assu}
 \varliminf_{j\to\infty}\frac{h(\mu_j)}{P(\phi)-\int\phi d\mu_j}>\beta_\infty(\phi),\end{equation}
 then
\begin{equation}\label{desired}
\frac{h(\mu_0)}{P(\phi)-\int\phi d\mu_0}\geq\varlimsup_{j\to\infty}\frac{h(\mu_j)}{P(\phi)-\int\phi d\mu_j}.\end{equation}
\end{theorem}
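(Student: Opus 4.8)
The plan is to prove the desired upper semicontinuity of the ratio $r(\mu):=h(\mu)/(P(\phi)-\int\phi\,d\mu)$ by reducing it to an upper semicontinuity statement for \emph{finite} partitions, and then to control the discrepancy -- the entropy that can escape to infinity -- by a tail estimate whose size is, up to an arbitrarily small error, a multiple $\beta$ of $\int_{R_N}(-\phi)\,d\mu_j$ for any $\beta>\beta_\infty(\phi)$. Here $R_N=X\setminus\bigcup_{k\le N}[k]$ is the union of cylinders on large symbols. I would fix the generating partition $\alpha=\{[k]\}_{k\in S}$ into $1$-cylinders and its finite truncations $\mathcal A_N=\{[1],\ldots,[N],R_N\}$. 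Since each $\mu_j,\mu_0\in\mathcal M_\phi(\sigma|_X)$ has finite static entropy (because $\int\phi\,d\mu>-\infty$), the Kolmogorov--Sina{\u\i} theorem gives $h(\mu)=h_\mu(\sigma,\alpha)$, and the standard bound $h_\mu(\sigma,\alpha)\le h_\mu(\sigma,\mathcal A_N)+H_\mu(\alpha\mid\mathcal A_N)$ splits the entropy into a finite-partition part and a tail part.

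The crucial tail estimate is $H_\mu(\alpha\mid\mathcal A_N)\le -\sum_{k>N}\mu[k]\log\mu[k]$; applying $\log t\le t/e$ against the weights $\exp(\beta\sup_{[k]}\phi)$ with $\beta>\beta_\infty(\phi)$ and using $\inf_{[k]}(-\phi)\le\int_{[k]}(-\phi)\,d\mu/\mu[k]$ yields
\[
h(\mu)\le h_\mu(\sigma,\mathcal A_N)+\beta\int_{R_N}(-\phi)\,d\mu+\tfrac1e\,W_\beta^{(N)},
\qquad W_\beta^{(N)}=\sum_{k>N}\sup_{[k]}\exp(\beta\phi),
\]
where $W_\beta^{(N)}\to0$ as $N\to\infty$ uniformly in $\mu$ since $\beta\phi$ is summable. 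Writing the denominator as $P(\phi)-\int\phi\,d\mu=c_N(\mu)+\int_{R_N}(-\phi)\,d\mu$ with $c_N(\mu)=P(\phi)+\int_{X\setminus R_N}(-\phi)\,d\mu$, and choosing $N$ so large that $-\phi>0$ on $R_N$, I would feed this into the elementary mediant inequality $\frac{a+\beta I}{c+I}\le\max\!\big(\tfrac ac,\beta\big)$ (valid for $a\ge0$, $c>0$, $I\ge0$) to obtain, for all large $j$,
\[
r(\mu_j)\le\max\!\left(\frac{h_{\mu_j}(\sigma,\mathcal A_N)+\tfrac1e W_\beta^{(N)}}{c_N(\mu_j)},\ \beta\right).
\]

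Then I would take limits in the right order. For \emph{fixed} finite $N$, the map $\mu\mapsto h_\mu(\sigma,\mathcal A_N)$ is upper semicontinuous (an infimum of continuous functions, the partition elements being clopen), while $\mu\mapsto c_N(\mu)$ is weak*-continuous because $(-\phi)\mathbf 1_{X\setminus R_N}$ is bounded and continuous -- here uniform continuity of $\phi$ enters, bounding the oscillation of $\phi$ on each $1$-cylinder and hence making $\phi$ bounded on the finitely many core cylinders. Taking $\varlimsup_{j\to\infty}$ and using $c_N(\mu_j)\to c_N(\mu_0)>0$ (for $N$ large) gives $\varlimsup_j r(\mu_j)\le\max\big((h(\mu_0)+\tfrac1e W_\beta^{(N)})/c_N(\mu_0),\,\beta\big)$; letting $N\to\infty$ and then $\beta\downarrow\beta_\infty(\phi)$ yields
\[
\varlimsup_{j\to\infty}r(\mu_j)\le\max\big(r(\mu_0),\,\beta_\infty(\phi)\big).
\]
Finally, assumption \eqref{assu} forces $\varlimsup_j r(\mu_j)\ge\varliminf_j r(\mu_j)>\beta_\infty(\phi)$, so the maximum must be attained by $r(\mu_0)$, which is precisely \eqref{desired}. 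The main obstacle is the middle step: since entropy genuinely escapes to infinity, one cannot bound the escaping entropy in isolation, and the role of the mediant inequality is that the escaping entropy comes paired with an equal weight of $\int_{R_N}(-\phi)$ in the denominator, so it feeds into the ratio only at the harmless level $\beta\approx\beta_\infty(\phi)$ rather than inflating it; checking that the error terms are uniform in $j$ and keeping the denominators $c_N(\mu_j)$ bounded away from $0$ is where the care is required.
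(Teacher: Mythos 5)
Your argument is sound in substance, and it takes a genuinely different route from the paper's proof. The paper normalizes $P(\phi)=0$ and splits into two cases: when $\phi$ is constant on $1$-cylinders it extends measures to the full shift, projects them onto the finite subshifts $\Sigma_p$, and proves Proposition \ref{approx-full} by feeding an auxiliary Bernoulli measure built from the tail masses $\mu[k]$, $k>p$, into Lemma \ref{modify1}; the ratio is then controlled by the elementary fact that $(a-b)/(c-d)\ge a/c$ when $c>d$ and $a/c\ge b/d$, with hypothesis \eqref{assu} entering through \eqref{view}, and upper semicontinuity of entropy is borrowed from the finite shift $\Sigma_p$. The general uniformly continuous case is then reduced to the locally constant one by $q$-block recoding, which is where uniform continuity is consumed (through the $o(q)$ variation estimate \eqref{varia} and Lemma \ref{lconst}). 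You replace all of this by a single argument: the conditional-entropy bound $h(\mu)\le h_\mu(\sigma,\mathcal A_N)+H_\mu(\alpha\mid\mathcal A_N)$, the weighted inequality $-t\log t\le -t\log w+w/e$ with weights $w_k=\exp(\beta\sup_{[k]}\phi)$ --- a hands-on substitute for the paper's detour through the tail Bernoulli measure and Lemma \ref{modify1}, and one that works for arbitrary summable $\phi$, not only locally constant ones --- and the mediant inequality $\frac{a+\beta I}{c+I}\le\max(a/c,\beta)$ in place of the paper's ratio lemma. Concrete gains: no case distinction and no recoding, so \eqref{varia} is never needed; and \eqref{assu} is invoked exactly once, at the very end, to discard the $\beta$-branch of the maximum, rather than being threaded through the estimates. (Your appeal to Kolmogorov--Sina{\u\i} requires $H_\mu(\alpha)<\infty$; this does follow from $\int\phi\,d\mu>-\infty$, but only via the same weight trick with $\beta=1$ and summability of $\phi$, so say so.)

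One justification is wrong, although the step it supports is salvageable. You claim that uniform continuity bounds the oscillation of $\phi$ on each $1$-cylinder, hence that $\phi$ is bounded on $X\setminus R_N$ and $\mu\mapsto c_N(\mu)$ is weak*-continuous. This is false for the metric $d(x,y)=\exp(-\inf\{i\colon x_i\neq y_i\})$: uniform continuity controls $|\phi(x)-\phi(y)|$ only when $x,y$ agree in a long initial block, whereas two points of the same $1$-cylinder can sit at the fixed distance $e^{-1}$. For instance, on the full shift over $\mathbb N$ the function $\phi(x)=-x_0-x_1$ is constant on $2$-cylinders (hence uniformly continuous) and summable with $\beta_\infty(\phi)=0$, yet unbounded below on every $1$-cylinder; one can then produce weak*-convergent sequences in $\mathcal M_\phi(\sigma|_X)$ along which $\int_{X\setminus R_N}(-\phi)\,d\mu_j$ blows up, so $c_N$ is genuinely not continuous. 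The repair is one line: $c_N(\mu_j)$ appears only in the denominator of an upper bound with nonnegative numerator, so all you need is $\varliminf_{j\to\infty}c_N(\mu_j)\ge c_N(\mu_0)>0$, i.e.\ lower semicontinuity. This holds because $(-\phi)\mathbf{1}_{X\setminus R_N}$ is continuous (the set is clopen) and bounded below by $\min(0,-\sup\phi)$, hence is integrated lower semicontinuously with respect to weak*-convergence (truncate at height $M$ and apply monotone convergence). With this substitution, together with your correct observation that $\mu\mapsto h_\mu(\sigma,\mathcal A_N)$ is an infimum of weak*-continuous functions (the partition elements being clopen), the proof goes through as written --- and, notably, it then uses only plain continuity of $\phi$, not uniform continuity, which the paper's recoding step cannot avoid.
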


\begin{remark}\label{first}
As continuous functions on $X$ bounded from above are integrated 
upper semi-continuously,
the assumption $P(\phi)-\int\phi d\mu_0>0$ in Theorem \ref{ups} implies
 $P(\phi)-\int\phi d\mu_j>0$ for sufficiently large $j$.
\end{remark}

A main inspiration for Theorem \ref{ups} is from
\cite[Lemma 6.5]{FanJorLiaRam16}, which is used for refined descriptions of multifractal spectra
 of fully branched Markov interval maps. 
Below we finish the proof of Theorem \ref{uniquemin} assuming Theorem \ref{ups}.

\subsection*{Proof of Theorem \ref{uniquemin}}
  Let $\mu_*\in\mathcal M(\sigma|_X)$ be a minimizer. Let
 $\{\mu_k\}_{k=1}^\infty$ be a sequence in $\mathcal M_\phi(\sigma|_X)$ 
which converges in the weak*-topology to $\mu_*$ 
with $\lim_{k\to\infty}F(\mu_k)=0$. 
Lemma \ref{div} gives 
$\inf_{k\geq1}\int\phi d\mu_k>-\infty$; 
As $\phi$ is continuous bounded from above, it is integrated upper semi-continuously
 \cite[Lemma 1]{JMU14}.
In particular, $\int\phi d\mu_*>-\infty$ holds.
    If $\varliminf_{k\to\infty}h(\mu_k)=0$, then
 for a subsequence $\{\mu_{k_j}\}_{j=1}^\infty$ with $\lim_{j\to\infty}h(\mu_{k_j})=0$
 the formula \eqref{rf} gives
\begin{align*}
0=\lim_{j\to\infty}F(\mu_{k_j})&\leq \varlimsup_{j\to\infty} \int\phi d\mu_{k_j}\leq h(\mu_*)+\int\phi d\mu_*,
\end{align*}
namely $\mu_*$ is an equilibrium state for the potential $\phi$.
If $\varliminf_{k\to\infty}h(\mu_k)>0$, then
$\varliminf_{k\to\infty}(-\int\phi d\mu_k)>0$ and 
\begin{equation*}
0=\lim_{k\to\infty}F(\mu_k)=\lim_{k\to\infty}
\left(-\int\phi d\mu_k\right)\left(\frac{h(\mu_k)}{-\int\phi d\mu_k}-1\right).
\end{equation*}
It follows that
$$\lim_{k\to\infty}\left(\frac{h(\mu_k)}{-\int\phi d\mu_k}-1\right)=0.$$
 We have $-\int\phi d\mu_*\geq h(\mu_*)\geq 0$.
If $-\int\phi d\mu_*=0$, then clearly
$\mu_*$ is an equilibrium state for the potential $\phi$. 
If $-\int\phi d\mu_*>0$, then Theorem \ref{ups} gives
$$\frac{h(\mu_*)}{-\int\phi d\mu_*}-1\geq0,$$
namely $\mu_*$ is an equilibrium state for the potential $\phi$.
\qed

\subsection*{Proof of Theorem \ref{ups}}\label{limit}
We shall view measures in $\mathcal M(\sigma|_X)$ as measures 
on the full shift $\mathbb N^{\mathbb N}$, and project them
to the canonical finite subsystems $\Sigma_p$ $(p\in\mathbb N)$,
show for each $p\in\mathbb N$ a $p$-th approximation of the inequality \eqref{desired}, and finally let $p\to\infty$ to obtain \eqref{desired}.
The condition $\beta_\infty(\phi)<1$  will be used to control tails arising in approximations of entropy and integrals of the potential.

Considering $\phi-P(\phi)$ instead of $\phi$, we may assume $P(\phi)=0$.
Put $\beta_\infty:=\beta_\infty(\phi)<1$.
Let $\{\mu_j\}_{j=1}^\infty$ be a sequence in $\mathcal M_\phi(\sigma|_X)$ which converges to $\mu_0\in \mathcal M_\phi(\sigma|_X)$ and assume $-\int\phi d\mu_0>0$.
Since $\beta_\infty\geq0$ and
$\varliminf_{j\to\infty}(-\int\phi d\mu_j)>0$ as in Remark \ref{first},
\eqref{assu} implies
 \begin{equation}\label{zwei}\varliminf_{j\to\infty}h(\mu_j)>0.\end{equation} 
  It is convenient to split the rest of the proof into two cases.
\medskip


\noindent{\it Case 1: $\phi$ is constant on each $1$-cylinder of $X$.}
We view each measure $\mu\in\mathcal M(\sigma|_X)$ as an element of $\mathcal M(\sigma)$
by setting $\mu(A)=\mu(A\cap X)$ for any Borel subset $A$ of $\mathbb N^{\mathbb N}$.
This extension preserves entropy.
For each $k\in\mathbb N$ let $\langle k \rangle$ denote
 the corresponding $1$-cylinder of $\mathbb N^{\mathbb N}$.
For each $p\in\mathbb N$ we consider the shift-invariant subspace
$$\Sigma_p=\{x\in \mathbb N^{\mathbb N}\colon x_i\leq p\quad\forall i\in\mathbb N\}.$$
Define a projection $\pi_p\colon\mathbb N^{\mathbb N}\to\Sigma_p$
as follows: for each $x=x_0x_1x_2\cdots\in \mathbb N^{\mathbb N}$ define $\pi_p(x)\in\Sigma_p$ by
 replacing in the sequence 
$x_0x_1x_2\cdots$ all symbols greater than or equal to $p+1$ by the symbol $p$. 
For each $\mu\in\mathcal M(\sigma)$, write $\mu|_p$ for $\mu\circ\pi_p^{-1}$.
Since $\pi_p$ commutes with the shift, $\mu|_p$ is a $\sigma|_{\Sigma_p}$-invariant measure.
Put
$$c_p(\mu)=\sum_{k=p+1}^\infty\mu[k]
 \quad\text{and}\quad K_p(\mu)=-\sum_{k=p+1}^\infty\phi(k)\mu[k].$$
Notice that $\int\phi d\mu>-\infty$ if and only if $K_p(\mu)\to0$ as $p\to\infty$.

\begin{prop}\label{approx-full}
 If
  $\phi\colon X\to\mathbb R$ is constant on each $1$-cylinder of $X$, then
  for any $\delta>0$ there exists $p_0\geq0$ such that for every
   $\mu\in\mathcal M_\phi(\sigma|_X)$ and every
   $p\geq p_0$,
\begin{align*}h(\mu)-h(\mu|_p)\leq&  -(1-c_p(\mu))\log(1-c_p(\mu))\\
&-c_p(\mu)\log c_p(\mu)
+(\beta_\infty+\delta)K_p(\mu).\end{align*}
\end{prop}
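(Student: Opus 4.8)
The plan is to realize both $h(\mu)$ and $h(\mu|_p)$ as Kolmogorov--Sinai entropies relative to explicit partitions of the full shift and to bound their difference by a single static conditional entropy, which then splits into a binary-choice part (producing the $H(c_p(\mu))$ terms) and a tail part (producing the $K_p(\mu)$ term). Set $\mathcal{P}=\{[k]\}_{k\in\mathbb N}$, the generating partition of $\mathbb N^{\mathbb N}$ into $1$-cylinders. Since $\pi_p$ commutes with $\sigma$ and the $\pi_p$-preimage of the $1$-cylinder partition of $\Sigma_p$ is $\mathcal{Q}_p:=\{[0],\dots,[p-1],A_p\}$ with $A_p=\{x:x_0\geq p\}$, the factor relation $\mu|_p=\mu\circ\pi_p^{-1}$ gives $h(\mu|_p)=h_\mu(\sigma,\mathcal{Q}_p)$; the Gibbs-type estimate used in the last step also yields $H_\mu(\mathcal{P})<\infty$ for $\mu\in\mathcal{M}_\phi(\sigma|_X)$, so $h(\mu)=h_\mu(\sigma,\mathcal{P})$. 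As $\mathcal{P}$ refines $\mathcal{Q}_p$, the standard inequality $h_\mu(\sigma,\mathcal{P})\leq h_\mu(\sigma,\mathcal{Q}_p)+H_\mu(\mathcal{P}\mid\mathcal{Q}_p)$ reduces the proposition to estimating $H_\mu(\mathcal{P}\mid\mathcal{Q}_p)$.

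Next I would insert the intermediate partition $\mathcal{T}_p:=\{[0],\dots,[p],A_{p+1}\}$, $A_{p+1}=\{x:x_0\geq p+1\}$, which satisfies $\mathcal{P}\succeq\mathcal{T}_p\succeq\mathcal{Q}_p$, and apply the chain rule
\[
H_\mu(\mathcal{P}\mid\mathcal{Q}_p)=H_\mu(\mathcal{T}_p\mid\mathcal{Q}_p)+H_\mu(\mathcal{P}\mid\mathcal{T}_p).
\]
In the first summand only the atom $A_p$ is refined, into $[p]$ and $A_{p+1}$, so it equals $-\mu[p]\log\mu[p]-c_p(\mu)\log c_p(\mu)+(\mu[p]+c_p(\mu))\log(\mu[p]+c_p(\mu))$. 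Writing $x=\mu[p]\in[0,1-c_p(\mu)]$ and $c=c_p(\mu)$, the function $x\mapsto-x\log x+(x+c)\log(x+c)$ has non-negative derivative $\log(1+c/x)$ and takes the value $-(1-c)\log(1-c)$ at the right endpoint $x=1-c$, so this summand is bounded above by $-(1-c_p(\mu))\log(1-c_p(\mu))-c_p(\mu)\log c_p(\mu)$, exactly the binary-entropy contribution in the claim.

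It remains to bound the tail term $H_\mu(\mathcal{P}\mid\mathcal{T}_p)=-\sum_{k\geq p+1}\mu[k]\log(\mu[k]/c_p(\mu))$, and this is where $\beta_\infty(\phi)<1$ enters. Fix $\beta_0=\beta_\infty(\phi)+\delta$; since $\beta_0>\beta_\infty(\phi)$ the potential $\beta_0\phi$ is summable, so $W_p:=\sum_{k\geq p+1}\exp(\beta_0\phi(k))$ is the tail of the convergent series $Z_1(\beta_0\phi)$ and tends to $0$. I would choose $p_0$, depending only on $\delta$ and $\phi$ and hence uniform in $\mu$, so large that $W_p\leq1$ for all $p\geq p_0$. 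Comparing the conditional probability vector $(\mu[k]/c_p(\mu))_{k\geq p+1}$ with the reference probability vector $(\exp(\beta_0\phi(k))/W_p)_{k\geq p+1}$ by Gibbs' inequality and multiplying through by $c_p(\mu)$ gives $H_\mu(\mathcal{P}\mid\mathcal{T}_p)\leq\beta_0K_p(\mu)+c_p(\mu)\log W_p\leq(\beta_\infty(\phi)+\delta)K_p(\mu)$, the last step using $\log W_p\leq0$. Summing the two bounds yields the asserted inequality for all $p\geq p_0$ and all $\mu\in\mathcal{M}_\phi(\sigma|_X)$.

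I expect the tail estimate to be the main obstacle: the tail entropy is genuinely unbounded, and the whole point is to trade it for the finite weighted mass $K_p(\mu)$. The summability of $\beta_0\phi$ --- which rests on $\beta_\infty(\phi)<1$ and the freedom to take $\delta$ small --- is precisely what makes the reference vector a finite measure, forces the error term $c_p(\mu)\log W_p$ to have the correct sign, and allows the threshold $p_0$ to be chosen once and for all, independently of $\mu$. The remaining ingredients (the factor identification, the refinement inequality, the chain rule, and the elementary one-variable estimate for the binary term) are routine entropy manipulations.
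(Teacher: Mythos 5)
Your argument is correct, and while it follows the same overall skeleton as the paper's proof (pull the $1$-cylinder partition of $\Sigma_p$ back through $\pi_p$, control the entropy defect $h(\mu)-h(\mu|_p)$ by a static entropy quantity, split that quantity into a binary-choice part and a tail part, and use summability of $(\beta_\infty+\delta)\phi$ to get a $\mu$-independent threshold $p_0$), the two key technical devices you use are genuinely different. Where you invoke the single static inequality $h_\mu(\sigma,\mathcal P)\leq h_\mu(\sigma,\mathcal Q_p)+H_\mu(\mathcal P\mid\mathcal Q_p)$ plus the chain rule through $\mathcal T_p$, the paper instead works with the coarser partition $\mathscr B_p=\bigl\{\bigcup_{k=0}^p[k],[p+1],[p+2],\ldots\bigr\}$, proves the dynamical subadditivity $h_\mu(\mathcal Q_p\vee\mathscr B_p)\leq h_\mu(\mathcal Q_p)+h_\mu(\mathscr B_p)$ (its Lemma 2.6, which needs an approximation by finite partitions $\mathscr B_{p,n}$ and a convergence theorem from Walters), and then bounds $h_\mu(\mathscr B_p)\leq H_\mu(\mathscr B_p)$; since merging $[0],\ldots,[p]$ into one atom directly produces the term $-(1-c_p)\log(1-c_p)$, the paper never needs your one-variable maximization of $-x\log x+(x+c)\log(x+c)$. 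For the tail, the paper forms the Bernoulli measure with weights $\mu[k]/c_p(\mu)$, $k\geq p+1$, and applies its pressure-based Lemma 2.1 (via $h(\nu)+\beta_0\int\varphi\,d\nu\leq P(\beta_0\varphi)$ and the fact that $\int\varphi\,d\nu_p\leq\sup_{k\geq p+1}\phi(k)\to-\infty$ uniformly in $\mu$), whereas you use Gibbs' inequality (nonnegativity of relative entropy) against the explicit reference vector $\bigl(e^{\beta_0\phi(k)}/W_p\bigr)_{k\geq p+1}$, with the sign of $\log W_p$ supplying the uniform choice of $p_0$; the two routes prove exactly the same intermediate inequality $c_p(\mu)h(\nu_p)\leq(\beta_\infty+\delta)K_p(\mu)$. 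Your execution is more self-contained --- it avoids the paper's citations to [Tak19, Lemma 2.1], [Wal82, Theorem 4.22] and [MauUrb03, Theorem 2.1.7] --- at the cost of having to justify separately that $H_\mu(\mathcal P)<\infty$ (your Gibbs estimate does this correctly) so that $h(\mu)=h_\mu(\sigma,\mathcal P)$; the paper's route reuses its Lemma 2.1, which it needs elsewhere anyway. Two trivial points to add for completeness: dispose of the degenerate case $c_p(\mu)=0$ at the outset (your conditional probability vector is undefined there, but then $K_p(\mu)=0$ and both sides of the tail bound vanish), and note that the Gibbs comparison requires $K_p(\mu)<\infty$, which holds precisely because $\mu\in\mathcal M_\phi(\sigma|_X)$.
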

\begin{proof}
Before proceeding let us summarize basic facts on entropy. 
Let $\mathscr A=\{A_k\}_{k\in\mathbb N}$ be a countable partition of $X$ into Borel sets
and let $\mu\in\mathcal M(\sigma|_X)$.
The entropy of $\mathscr A$ with respect to $\mu$ is the number
$$H_\mu(\mathscr A)=-\sum_{k\in\mathbb N}\mu(A_k)\log\mu(A_k),$$
with the convention $0\log0=0$.
If $H_\mu(\mathscr A)<\infty$ then define
$$h_\mu(\mathscr A)=\lim_{n\to\infty}\frac{1}{n}H_\mu\left(\bigvee_{i=0}^{n-1}\sigma^{-i}\mathscr A\right).$$
where the symbols $\bigvee$ and $\vee$ denote the join of the partitions $\sigma^{-i}\mathscr{A}$ $(0\leq i\leq n-1)$.
Since $n\mapsto H_\mu\left(\bigvee_{i=0}^{n-1}\sigma^{-i}\mathscr A\right)$ is sub-additive,
this limit exists, is finite and $h_\mu(\mathscr A)\leq H_\mu(\mathscr A)$.
We have $h(\mu)=\sup_{\mathscr A} h_\mu(\mathscr A)$ where the supremum is taken over all countable 
partitions $\mathscr A$ of $X$ with $H_{\mu}(\mathscr A)<\infty$. 
If $\mathscr A$ is a generator of the Borel sigma-field of $X$ and $H_{\mu}(\mathscr A)<\infty$ 
then $h(\mu)= h_{\mu}(\mathscr A)$.

Now, consider two partitions of $X$:
$$
\mathscr A_p=\{X\cap\pi_p^{-1}\langle 0\rangle,\ldots,X\cap\pi_p^{-1}\langle p\rangle\},\ \mathscr B_p=\left\{\bigcup_{k=0}^p[k],[p+1],[p+2],\ldots\right\}.$$
It is easy to check that $\mathscr A_p\vee\mathscr B_p$
  are generators
 of the Borel sigma-field of $X$.
 \begin{lemma}\label{lemma2}
 For every $\mu\in\mathcal M(\sigma)$ with finite entropy, $H_\mu(\mathscr B_p)<\infty$.
Moreover,  $h_{\mu}(\mathscr A_p\vee\mathscr B_p)\leq h_{\mu}(\mathscr A_p)+h_{\mu}(\mathscr B_p).$ \end{lemma}
 \begin{proof}
  The first assertion follows from \cite[Lemma 2.1]{Tak19}.
  For each integer $n\geq1$ we introduce a finite partition
$$  \mathscr B_{p,n}=\left\{\bigcup_{k=0}^p[k],[p+1],\ldots,[p+n],\bigcup_{k=p+n+1}^\infty[k]\right\}.$$
Then $h_{\mu}(\mathscr A_p\vee\mathscr B_{p,n})\leq h_{\mu}(\mathscr A_p)+h_{\mu}(\mathscr B_{p,n})
$ holds.
The $\{\mathscr B_{p,n}\}_{n=1}^\infty$ defines an increasing sequence of sub Borel sigma-fields of $X$
satisfying $\bigvee_{n=1}^\infty\mathscr B_{p,n}=\mathscr B_p.$
By \cite[Theorem 4.22]{Wal82},
$\lim_{n\to\infty}h_\mu(\mathscr B_{p,n})= h_\mu(\mathscr B_p)$,
and similarly $\lim_{n\to\infty}h_\mu(\mathscr A_p\vee\mathscr B_{p,n})= h_\mu(\mathscr A_p\vee\mathscr B_{p})$.
 \end{proof}
 
 \begin{lemma}\label{lemma3}
 For every $\mu\in\mathcal M(\sigma|_X)$,
 $h_{\mu}(\mathscr A_p)=h(\mu|_p).$
\end{lemma}
\begin{proof}
Consider finite partitions
$\mathscr C_p=\{\langle 0\rangle\cap\Sigma_p,\ldots,\langle p\rangle\cap\Sigma_p\}$ of $\Sigma_p$, and 
$\pi_p^{-1}\mathscr C_p=
\{\pi_p^{-1}\langle 0\rangle,\ldots,\pi_p^{-1}\langle p\rangle\}$
of  $\mathbb N^{\mathbb N}$.
Then $h_{\mu}(\mathscr A_p)=h_{\mu}(\pi_p^{-1}\mathscr C_p)$ holds.
 Since $\pi_p$ commutes with the shift,
  $h_{\mu}(\pi_p^{-1}\mathscr C_p)=h_{\mu|_p}(\mathscr C_p)$. 
 Since $\mathscr C_p$ is a generator of the Borel sigma-field of $\Sigma_p$,
  $h_{\mu|_p}(\mathscr C_p)=h(\mu|_p)$ holds.
\end{proof}

Returning to the proof of Proposition \ref{approx-full},
let $\mu\in\mathcal M_\phi(\sigma|_X)$. Then $h(\mu)<\infty$ holds.
In the case $c_p(\mu)=0$ there is nothing to prove since
$h(\mu)-h(\mu|_p)=0$.
Hence we assume $c_p(\mu)>0$.
 Lemma \ref{lemma2} gives
$h(\mu)=h_{\mu}(\mathscr A_p\vee\mathscr B_p)\leq h_{\mu}(\mathscr A_p)+h_{\mu}(\mathscr B_p).$
Using this and Lemma \ref{lemma3} we have
\begin{equation}\label{sav}
\begin{split}
h(\mu)-h(\mu|_p)&\leq h_\mu(\mathscr B_p)\leq H_{\mu}(\mathscr B_p)\\
&=-\sum_{k=0}^p\mu[k]\log\sum_{k=0}^p\mu[k]-\sum_{k=p+1}^\infty\mu[k]\log\mu[k].
\end{split}
\end{equation}
To treat the last summand in \eqref{sav},
define a potential $\varphi\colon\mathbb N^{\mathbb N}\to\mathbb R$
which is constant on each $1$-cylinder of $\mathbb N^{\mathbb N}$ by $\varphi|_{\langle k\rangle}=\phi(k)$.
The summability of $\phi$ implies that of $\varphi$.
 Denote by $\nu_p$ the Bernoulli measure on $\mathbb N^{\mathbb N}$
which assigns to each $1$-cylinder $\langle k\rangle$, $k\geq p+1$ the probability
$\mu[k]/c_p(\mu)$. Notice that
\begin{equation}\label{entro1}
h(\nu_p)=-\sum_{k= p+1}^\infty\frac{\mu[k]}{c_p(\mu)}\log \frac{\mu[k]}{c_p(\mu)}
\ \text{ and }\ \int\varphi d\nu_p=-\frac{K_p(\mu)}{c_p(\mu)}>-\infty.\end{equation}
 Since
$\int\varphi d\nu_{p}\leq \sup_{k\geq p+1}\phi(k),$
the summability of $\phi$ implies $\int\varphi d\nu_{p}\to-\infty$  as $p\to\infty$.
 Lemma \ref{modify1} applied to $(\mathbb N^{\mathbb N}, \varphi)$ shows that
  for any $\delta>0$ there exists $p_0\geq1$ independent of $\mu$ such that
 for every $p\geq p_0$,
$-h(\nu_p)/\int\varphi d\nu_{p}\leq  \beta_\infty(\varphi)+
\delta.$
Since $\phi$ is constant on each $1$-cylinder, $\beta_\infty(\varphi)=\beta_\infty$ holds, and
 we obtain
\begin{equation}\label{entro2}
c_{p}(\mu)h(\nu_p)\leq K_p(\mu)( \beta_\infty+
\delta).\end{equation}
Plugging \eqref{entro1} into \eqref{entro2} and then rearranging the result gives
$$-\sum_{k=p+1}^\infty \mu[k]\log\mu[k]\leq -c_p(\mu)\log c_p(\mu)
+K_p(\mu)( \beta_\infty+
\delta).$$
Plugging this inequality into \eqref{sav} yields the desired one.
\end{proof}

In view of \eqref{assu} fix $\delta>0$ and then $\gamma_0\in(0,1)$ such that
\begin{equation}\label{view}
\gamma_0\cdot\varliminf_{j\to\infty}\frac{h(\mu_j)}{-\int\phi d\mu_j}\geq\beta_\infty+\delta.\end{equation}
Let $\gamma\in(\gamma_0,1)$.
Since $\{\mu_j\}_{j=1}^\infty$ converges in the weak*-topology to $\mu_0$, 
 Portmanteau's theorem implies
 $\lim_{p\to\infty}\varlimsup_{j\to\infty} c_p(\mu_j)=0$.
From  this and \eqref{zwei},
there exists $p_0\geq0$ such that for each $p\geq p_0$ we have
    \begin{equation}\label{bra1}
   (1-c_p(\mu_{j}))\log(1-c_p(\mu_{j}))\geq-(1-\gamma) h(\mu_{j}),\end{equation}
 for sufficiently large $j$. Since $\int\phi d\mu_j>-\infty$ we have
\begin{equation}\label{have}
\begin{split}
\int\phi d\mu_j= \sum_{k=0}^p\phi(k)\mu_j[k]+\sum_{k=p+1}^\infty\phi(k) \mu_j[k] =\int\phi d\mu_j|_p-K_p(\mu_j).
\end{split}
\end{equation}
The equation \eqref{have} for $\mu_0$ in the place of $\mu_j$ implies 
$\int\phi d\mu_0|_{p}\to\int\phi d\mu_0$ as $p\to\infty$.
We assume $p$ is large enough so that $-\int\phi d\mu_0|_p>0$.
Since $\phi$ is bounded continuous on $\Sigma_p$,
the weak*-convergence of $\mu_{j}|_p$ to $\mu_0|_p$ as $j\to\infty$
gives $\int\phi d\mu_{j}|_{p}\to \int\phi d\mu_0|_{p}$.
In particular, 
for sufficiently large $j$ we have 
\begin{align*}
\frac{h(\mu_{j}|_p)}{-\int\phi d\mu_{j}|_{p}}&\geq\frac{h(\mu_{j})
-(\beta_\infty+\delta)K_p(\mu_j)+ (1-c_p(\mu_{j}))\log(1-c_p(\mu_{j}))}{-\int\phi d\mu_{j}-K_p(\mu_j)}\\
&\geq   
 \frac{\gamma h(\mu_{j})-(\beta_\infty+\delta)K_p(\mu_j)}{-\int\phi d\mu_{j}-K_p(\mu_j)}\\
 &\geq
  \gamma\frac{h(\mu_{j})}{-\int\phi d\mu_{j}}.\end{align*}
  The first inequality is a consequence of  Proposition \ref{approx-full}, 
  the second of \eqref{bra1} and \eqref{have}.
  The last inequality is trivial if $K_p(\mu_j)=0$. Otherwise
  we appeal to the following:
  for $a,b,c,d>0$ such that $c>d$ and $a/c\geq b/d$, $(a-b)/(c-d)\geq a/c$ holds.
  Apply this with $a=\gamma h(\mu_j)$, 
  $b=(\beta_\infty+\delta)K_p(\mu_j)$, $c=-\int\phi d\mu_j$, $d=K_p(\mu_j)$.
  The condition $a/c\geq b/d$ is fulfilled by virtue of \eqref{view} and $\gamma_0<\gamma<1$.
 
  The weak*-convergence of $\mu_{j}|_p$ to $\mu|_p$ as $j\to\infty$ takes place in the space of shift-invariant measures
on $\Sigma_{p}$ where the entropy is upper semi-continuous and $\phi$ is bounded continuous. Hence
\begin{equation}\label{bra4}
\frac{h(\mu_0|_{p})}{-\int\phi d\mu_0|_p}\geq \gamma\cdot \varlimsup_{j\to\infty} 
\frac{h(\mu_{j})}{-\int\phi d\mu_{j}}.\end{equation}
By Proposition \ref{approx-full} and \eqref{have} for $\mu_0$ in the place of $\mu_j$, we have $$
\lim_{p\to\infty}\frac{h(\mu_0|_{p})}{-\int\phi d\mu_0|_p}=\frac{h(\mu_0)}{-\int\phi d\mu_0}.$$
 Therefore, letting $p\to\infty$ and then $\gamma\to 1$ in \eqref{bra4} yields the desired inequality.
\medskip

\noindent{\it Case 2: $\phi$ is not constant on some $1$-cylinder of $X$.}
For each $q\geq1$, let $(E^q)^{\mathbb N}$ denote
 the full shift  with symbols in $E^q$ and
$\sigma_q$ the left shift on $(E^q)^{\mathbb N}$.
The map $$\theta\colon\{x_i\}_{i=0}^\infty\in (E^q)^{\mathbb N}\mapsto x_0x_1\cdots\in \mathbb N^{\mathbb N}$$
 is a homeomorphism onto its image and satisfies $\theta\circ \sigma_q=\sigma^q\circ\theta$.
Put 
$$X_q=\{\{x_i\}_{i=0}^\infty\in (E^q)^{\mathbb N}\colon \theta(x)\in X\}.$$
The restriction of $\theta$ to $X_q$ is also denoted by $\theta$.
The $1$-cylinder in $X_q$ corresponding to the symbol $\omega\in E^{q}$ is denoted by $(\omega)$.
Define $\Phi\colon X_q\to\mathbb R$  by
$$\Phi|_{(\omega)}=\sup_{ [\omega]}S_q\phi\quad\text{for each }\omega\in E^q.$$
Note that $\Phi$  is constant on each $1$-cylinder of  $X_q$.

\begin{lemma}\label{lconst}
For each $\mu\in\mathcal M(\sigma|_X)$ the following holds:

\begin{itemize}
\item[(a)] if $\int\phi d\mu>-\infty$ then
$\left|q\int\phi d\mu  -\int\Phi d(\mu\circ\theta)\right|\leq D_q(\phi);$
\item[(b)]  if $\int\Phi d(\mu\circ\theta)>-\infty$ then $\int\phi d\mu>-\infty$. 
\end{itemize}
\end{lemma}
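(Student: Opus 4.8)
The plan is to transport the whole comparison to the full shift $X_q$ through the conjugacy $\theta$, where $\Phi$ is constant on $1$-cylinders, and to compare $\Phi$ with the pulled-back Birkhoff sum $\psi:=S_q\phi\circ\theta\colon X_q\to\mathbb R$. Since $\theta\colon X_q\to X$ is a homeomorphism (indeed a bijection onto $X$), $\mu\circ\theta$ is the pushforward of $\mu$ under $\theta^{-1}$, so for every Borel $g\colon X_q\to\mathbb R$ that is bounded above one has the change-of-variables identity $\int_{X_q}g\,d(\mu\circ\theta)=\int_X g\circ\theta^{-1}\,d\mu$, valid in $[-\infty,\infty)$. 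Taking $g=\psi$ and invoking the $\sigma$-invariance of $\mu$ yields the single identity underlying both parts,
$$\int_{X_q}\psi\,d(\mu\circ\theta)=\int_X S_q\phi\,d\mu=q\int\phi\,d\mu,$$
which makes sense in $[-\infty,\infty)$ because $\sup\phi<\infty$.

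Next I would establish the pointwise sandwich $\psi\le\Phi\le\psi+D_q(\phi)$ on $X_q$. Fix $\omega\in E^q$ and $x$ in the $1$-cylinder $(\omega)$; then $\theta(x)\in[\omega]$, so $\psi(x)=S_q\phi(\theta(x))\le\sup_{[\omega]}S_q\phi=\Phi(x)$, which is the lower bound. For the upper bound, $\Phi(x)-\psi(x)\le\sup_{y,y'\in[\omega]}\big(S_q\phi(y)-S_q\phi(y')\big)$; since $y,y'\in[\omega]$ force $\sigma^iy$ and $\sigma^iy'$ into a common $(q-i)$-cylinder for $0\le i\le q-1$, summable variations bound this oscillation by $\sum_{n=1}^q\mathrm{var}_n(\phi)$, where $\mathrm{var}_n(\phi)=\sup_{\eta\in E^n}\sup_{u,v\in[\eta]}(\phi(u)-\phi(v))$. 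This sum is finite and is precisely $D_q(\phi)$, so $0\le\Phi-\psi\le D_q(\phi)$ holds everywhere; in particular $\Phi-\psi$ is a bounded function.

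Both claims then drop out by integrating the sandwich against $\mu\circ\theta$. For (a), if $\int\phi\,d\mu>-\infty$ the displayed identity makes $\int\psi\,d(\mu\circ\theta)=q\int\phi\,d\mu$ finite; since $\Phi-\psi$ is bounded, $\int\Phi\,d(\mu\circ\theta)$ is finite as well and
$$\Big|q\int\phi\,d\mu-\int\Phi\,d(\mu\circ\theta)\Big|=\Big|\int_{X_q}(\psi-\Phi)\,d(\mu\circ\theta)\Big|\le D_q(\phi).$$
For (b), the inequality $\psi\ge\Phi-D_q(\phi)$ gives $\int\psi\,d(\mu\circ\theta)\ge\int\Phi\,d(\mu\circ\theta)-D_q(\phi)>-\infty$, and by the displayed identity this equals $q\int\phi\,d\mu$, forcing $\int\phi\,d\mu>-\infty$.

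I expect the only real subtlety to be the bookkeeping with $-\infty$: because $\phi$, and hence $S_q\phi$ and $\Phi$, is bounded above but generally not below, every integral here is well defined in $[-\infty,\infty)$, the change of variables and the identity $\int S_q\phi\,d\mu=q\int\phi\,d\mu$ remain valid in that range, and adding or subtracting the bounded function $\Phi-\psi$ cannot alter finiteness. The one genuinely quantitative step is the oscillation estimate, and this is exactly where the summable-variations hypothesis enters, guaranteeing that $D_q(\phi)$ is finite and independent of $\mu$.
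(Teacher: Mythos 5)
Your argument is correct, and for part (a) it coincides with the paper's own proof: both hinge on the identity $q\int\phi\,d\mu=\int(S_q\phi)\circ\theta\,d(\mu\circ\theta)$ (change of variables plus shift-invariance) together with the pointwise estimate $\sup_{X_q}\left|(S_q\phi)\circ\theta-\Phi\right|\le D_q(\phi)$. The genuine difference is part (b): the paper deduces it by approximating $\phi$ by $\mu$-integrable functions, applying (a) to the approximants, and passing to the limit by the Monotone Convergence Theorem, whereas you integrate the one-sided bound $(S_q\phi)\circ\theta\ge\Phi-D_q(\phi)$ directly, after checking that for functions bounded above all integrals, the change of variables, and the additivity $\int S_q\phi\,d\mu=q\int\phi\,d\mu$ remain valid in $[-\infty,\infty)$. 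Your route is shorter and makes the $-\infty$ book-keeping explicit; the paper's route avoids extended-real additivity at the cost of an approximation argument. Two minor caveats. First, the paper never defines $D_q(\phi)$; from its use and from \eqref{varia} it should be the maximal oscillation $\sup_{\omega\in E^q}\sup_{x,y\in[\omega]}\left(S_q\phi(x)-S_q\phi(y)\right)$, whereas you take the a priori larger quantity $\sum_{n=1}^q\mathrm{var}_n(\phi)$ --- harmless, since the lemma and its later use only require a finite, $\mu$-independent bound that is $o(q)$. Second, your finiteness argument invokes summable variations, but Lemma \ref{lconst} is applied in Case 2 of the proof of Theorem \ref{ups}, where $\phi$ is only assumed uniformly continuous and summable; in that generality one should instead appeal to \eqref{varia} (i.e.\ \cite[Proposition 6.2(b)]{FieFieYur02}), or note that $n\mapsto\mathrm{var}_n(\phi)$ is non-increasing and tends to $0$, so that $\sum_{n=1}^q\mathrm{var}_n(\phi)=o(q)$ as soon as $\mathrm{var}_1(\phi)<\infty$. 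This mismatch of hypotheses does not affect the validity of your proof of the lemma itself, which takes $D_q(\phi)$ as given.
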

\begin{proof}
Since $q\int\phi\circ\theta d(\mu\circ\theta)=\int (S_{q}\phi)\circ\theta d(\mu\circ\theta)$ we have
\begin{align*}
\left|q\int\phi d\mu -\int\Phi d(\mu\circ\theta)\right|&=\left|q\int\phi\circ\theta d(\mu\circ\theta) -\int\Phi d(\mu\circ\theta)\right|\\
&\leq\sup_{X_q}\left| (S_{q}\phi)\circ\theta-\Phi\right|\leq D_q(\phi),\end{align*}
as required in  Lemma \ref{lconst}(a).
Lemma \ref{lconst}(b) follows from approximating $\phi$ with $\mu$-integrable functions,
using Lemma \ref{lconst}(a) and then the Monotone Convergence Theorem.
\end{proof}


 The uniform continuity of $\phi$ implies
  \begin{equation}\label{varia}
 \sup_{\omega\in E^q}\sup_{x,y\in [\omega]}S_q\phi(x)-S_q\phi(y)=o(q)\quad (q\to\infty),\end{equation} 
 see \cite[Proposition 6.2(b)]{FieFieYur02}. From \eqref{varia} and Lemma \ref{lconst}(a),
 for any $\epsilon\in(0,-\int\phi d\mu_0)$
 there exists $q\geq1$ such that
  $ |\int\Phi d(\mu\circ\theta)-q\int\phi d\mu|\leq q\epsilon/2$
 holds for every $\mu\in\mathcal M_\phi(\sigma|_X)$.
Moreover,
$h(\mu\circ\theta)=qh(\mu)$ holds.
 By Lemma \ref{lconst},
 measures in the definition of $
P(\Phi)$ and those in the definition of $P(\phi)$ are in one-to-one correspondence.
 It follows that 
 $|P(\Phi)|\leq q\epsilon/2$
and
\begin{equation}\label{e}\left|P(\Phi)-\int \Phi d(\mu\circ\theta)+q\int\phi d\mu\right|\leq q\epsilon.\end{equation}
From \eqref{varia} we have
$\log Z_1(\beta\Phi)=o\left(1/q\right)+\log Z_{q}(\beta\phi)$
for every $\beta>0$,
which implies $\beta_\infty(\Phi)\leq\beta_\infty<1.$ 
Hence we obtain
\begin{align*}\frac{h(\mu_0)}{-\int\phi d\mu_0-\epsilon}&\geq
\frac{h(\mu_0\circ\theta)}{P(\Phi)-\int\Phi d(\mu_0\circ\theta)}\quad\text{
by \eqref{e} for $\mu=\mu_0$}\\
&\geq\varlimsup_{j\to\infty}\frac{h(\mu_j\circ\theta)}{P(\Phi)-\int\Phi d(\mu_j\circ\theta)}\\
&\geq\varlimsup_{j\to\infty}\frac{h(\mu_j)}{-\int\phi d\mu_j+\epsilon}
\quad\text{
by \eqref{e} for $\mu=\mu_j$}\\
&\geq\varliminf_{j\to\infty}\frac{-\int\phi d\mu_j}
{-\int\phi d\mu_j+\epsilon}\varlimsup_{j\to\infty}\frac{h(\mu_j)}{-\int\phi d\mu_j}.
\end{align*}
The second inequality follows from the result in Case 1 applied to $(X_q,\Phi)$.
Letting $\epsilon\to0$ yields \eqref{desired}.
This completes the proof of Theorem \ref{ups}.
\qed

\subsection*{Acknowledgments}
 This research was partially supported by the JSPS KAKENHI 19K21835, 20H01811.
I thank Johannes Jaerisch for fruitful discussions.


\begin{thebibliography}{10}

\bibitem{ArnAve82}
Arnol'd, V. I., Avez, A.: {\it Ergodic problems
of classical mechanics,} Benjamin, New York (1982)



 



\bibitem{Bow75}
Bowen, R.:
 {\it Equilibrium states and the ergodic theory of Anosov
  diffeomorphisms,} Second revised edition.
  Lecture Notes in Mathematics, {\bf 470}
   Springer-Verlag, Berlin 2008.
   
   \bibitem{BowSer79} Bowen, R., Series, C.: Markov maps associated with fuchsian groups.
   Inst. Hautes \'Etudes Sci. Publ. Math. {\bf 50}, 153--170 (1979) 

  




\bibitem{DemZei98}
Dembo, A., Zeitouni, O.:
{\it Large deviations techniques and applications},  
  {\it Applications of Mathematics} {\bf 38},
\newblock Springer, second edition (1998)


\bibitem{DenKab07} Denker, M., Kabluchko, Z.: An Erd\"os-R\'enyi law for mixing processes.
Probab. Math. Statist. {\bf 27}, 139--149 (2007)






\bibitem{Dob68a} Dobrushin, R. L.: The description of a random field by means of conditional probabilities
and conditions of its regularity. Theor. Prob. Appl. {\bf 13}, 197--224. (1968)

\bibitem{Dob69} Dobrushin, R.  L.: Gibbsian random fields. The general case. Funct. Anal. Appl. {\bf 3}, 22--28. (1969)


\bibitem{Ell85}
Ellis, R.S.:
\newblock {\em Entropy, large deviations, and statistical mechanics}, 
   {\em Grundlehren der Mathematischen Wissenschaften} {\bf 271},
\newblock Springer (1985)


\bibitem{FanJorLiaRam16} Fan, A.-H.,  Jordan, T., Liao, L., Rams, M.:
Multifractal analysis for expanding interval maps with infinitely many branches.
Trans. Amer. Math. Soc. {\bf 367}, 1847--1870 (2015)


\bibitem{FieFieYur02} Fiebig, D., Fiebig, U.-R., Yuri, M.:
Pressure and equilibrium states for countable state Markov shifts.
Israel J. Math. {\bf 131}, 221--257 (2002)

\bibitem{GS98} Gurevic, B. M., Savchenko, S. V.: Thermodynamic formalism for countable symbolic Markov chains,
Russian Mathematical Survey {\bf 53} 245--344 (1998)

\bibitem{HarWri79} Hardy, G., Wright, E.: {\it An Introduction to the Theory of Numbers,}
Oxford University Press, New York (1979)

\bibitem{JaeKesMun16} Jaerisch, J., Kesseb\"ohmer, M., Munday, S.: A multifractal analysis for cuspidal windings on hyperbolic surfaces, arXiv:1610.05827.


\bibitem{JaeTak} Jaerisch, J., Takahasi, H.:
Mixed multifractal spectra of Birkhoff averages for non-uniformly expanding one-dimensional Markov maps
with countably many branches. arXiv:2004.04347.



\bibitem{JMU14} Jenkinson, O., Mauldin, R. D., Urba\'nski, M.:
Zero temperature limits of Gibbs-equilibrium states for countable alphabet
subshifts of finite type. Journal of Statistical Physics. {\bf 119}, 765--776 (2005)





\bibitem{Khi64} Khinchin,  A.Y.: {\it Continued Fractions.} University of Chicago Press, Chicago, London (1964).



\bibitem{Kif90} Kifer, Y.: Large deviations in dynamical systems and stochastic processes,
Trans. Amer. Math. Soc. {\bf 321}, 505--524 (1990)



\bibitem{Kif94} Kifer, Y.: Large deviations, averaging and periodic orbits of dynamical systems,
Commun. Math. Phys. {\bf 162},  33--46 (1994)



\bibitem{LanRue69} Lanford, O. E., Ruelle, D.: Observables at infinity and states with short range
correlations in statistical mechanics. Commun. Math. Phys. {\bf 13}, (1969) 194--215.




\bibitem{MauUrb03} Mauldin, R. D., Urba\'nski, M.: 
{\it Graph directed Markov systems: Geometry and Dynamics of Limit Sets.} Cambridge Tracts in Mathematics
 {\bf 148} Cambridge University Press (2003)
 
 \bibitem{May90} Mayer, D. H.: On the thermodynamic formalism for the Gauss transformation.
Commun. Math. Phys. {\bf 130}, 311--333 (1990)











\bibitem{OrePel89}
Orey, S., Pelikan, S.: Deviations of trajectory averages and the defect in {P}esin's formula
  for {A}nosov diffeomorphisms.
Trans. Amer. Math. Soc. {\bf  315}, 741--753 (1989)












\bibitem{PolSha96} Pollicott, M., Sharp, R.: Large deviations and the distribution of pre-images of rational maps.
Commun. Math. Phys. {\bf 181} (1996) 733--739.

\bibitem{Rue78} Ruelle, D.: {\it Thermodynamic formalism. The mathematical structures of classical equilibrium statistical mechanics.} 
Second edition. Cambridge University Press (2004)


\bibitem{Sar99} Sarig, O.: Thermodynamic formalism for countable Markov shifts.
Ergodic Theory and Dynamical Systems {\bf 19}, 1565--1593 (1999)



\bibitem{Sar03} Sarig, O.: Characterization of the existence of Gibbs states
for countable Markov shifts, Proc. Amer. Math. Soc. {\bf 131}, 1751--1758 (2003)





\bibitem{Tak84}
Takahashi, Y.:
\newblock Entropy functional (free energy) for dynamical systems and their
  random perturbations.
\newblock In {\em Stochastic analysis ({K}atata/{K}yoto, 1982)}, North-Holland Math. Library, 32, 437--467. North-Holland, Amsterdam, 1984. 


   
   
   \bibitem{Tak18} Takahasi, H.: Large deviation principles for countable Markov shifts. 
   Trans. Amer. Math. Soc. {\bf 372}, 7831--7855 (2019)
   

  

  \bibitem{Tak19} Takahasi, H.: Entropy-approachability for transitive Markov shifts
  over infinite alphabet. Proc. Amer. Math. Soc. in press
 
 

  
    \bibitem{Wal78} Walters, P.: Invariant measures and equilibrium states for some mappings which expand distances. Trans. Amer. Math. Soc. {\bf 236} (1978) 121--153.
   
    
\bibitem{Wal82} Walters, P.: {\it Introduction to ergodic theory}, Graduate Texts in Mathematics {\bf 79}, Springer (1982)



 \bibitem{Yur05} Yuri, M.: Large deviations for countable to one Markov systems.
Commun. Math. Phys. {\bf 258}, 455--474 (2005)

\end{thebibliography}
\end{document}